\title{\MakeUppercase{Dual Circumference and Collinear Sets}%
    \thanks{This work was partly funded by NSERC and MRI.}}
\author{Vida Dujmovi\'c%
        \thanks{School of Computer Science and Electrical Engineering,
                University of Ottawa}\,\, and
        Pat Morin%
        \thanks{School of Computer Science, Carleton University}}
\newcommand{\dual}[1]{{#1}^\star}
\begin{document}
\maketitle

\begin{abstract}
   We show that, if an $n$-vertex triangulation $G$ of maximum degree $\Delta$ has a dual that contains a cycle of length $\ell$, then $G$ has a non-crossing straight-line drawing in which some set, called a \emph{collinear set}, of $\Omega(\ell/\Delta^4)$ vertices lie on a line.  Using the current lower bounds on the length of longest cycles in cubic 3-connected graphs, this implies that every $n$-vertex planar graph of maximum degree $\Delta$ has a collinear set of size $\Omega(n^{0.8}/\Delta^4)$.
\end{abstract}

\section{Introduction}

Throughout this paper, all graphs are simple and finite and have at least
4 vertices.  For a planar graph $G$, we say that a set $S\subseteq V(G)$
is a \emph{collinear set} if $G$ has a non-crossing straight-line drawing
in which the vertices of $S$ are all collinear.  A \emph{plane graph} is
a planar graph $G$ along with a particular non-crossing drawing of $G$.
The \emph{dual} $\dual{G}$ of a plane graph $G$ is the graph whose
vertex set $V(\dual{G})$ is the set of faces in $G$ and in which $fg\in
E(\dual{G})$ if and only if the faces $f$ and $g$ of $G$ have at least
one edge in common.  The \emph{circumference}, $c(G)$, of a graph $G$
is the length of the longest cycle in $G$. In \secref{proof}, we prove
the following theorem:

\begin{thm}\thmlabel{main}
  Let $G$ be a triangulation of maximum degree $\Delta$ whose dual
  $\dual{G}$ has circumference $\ell$. Then $G$ has a collinear set of
  size $\Omega(\ell/\Delta^4)$.
\end{thm}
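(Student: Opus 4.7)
The plan is to convert the long dual cycle into a large collinear set, first using the cycle to locate a large candidate vertex set, then pruning it down via structural properties of non-crossing curves in the plane embedding.

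Realize a cycle $C$ of length $\ell$ in $\dual{T}$ as a simple closed Jordan curve $\gamma$ in the plane embedding of $T$: the curve $\gamma$ visits the interiors of the faces $f_1,\dots,f_\ell$ of $T$ corresponding to the vertices of $C$ in cyclic order, and crosses only the $\ell$ edges of $T$ shared by consecutive $f_i$. Each time $\gamma$ traverses a triangle $f_i$, it enters through one edge and exits through another; these two edges share a common vertex of $f_i$, which I will call the \emph{apex} $a_i$ of $f_i$. Each apex $a_i$ lies strictly on one side of $\gamma$, either in the bounded or in the unbounded region.

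Next, I apply two pigeonhole reductions. At least $\ell/2$ of the apex vertices lie on the same side of $\gamma$; without loss of generality, at least $\ell/2$ lie in the interior. Since $T$ has maximum degree $\Delta$, each vertex is incident to at most $\Delta$ triangular faces and therefore appears as an apex of at most $\Delta$ of the $f_i$. Hence there are at least $\ell/(2\Delta)$ distinct vertices among the interior apices; call this set $A$.

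The final step is to extract from $A$ a subset of size $\Omega(\ell/\Delta^4)$ that is a collinear set of $T$. The natural strategy is to locally deform $\gamma$ so that it passes through the chosen apex vertices while remaining a simple closed curve in the embedding, and to invoke a characterization of collinear sets by non-crossing vertex-visiting curves — a characterization that seems to underlie the SODA 2019 result cited in the abstract. This is where the main difficulty lies: local deformations around different apex vertices can conflict, because two apices that are close together in $T$, or that share incident edges, may force the rerouted curve to self-cross or to cross an unintended edge. Resolving these conflicts likely requires further pruning of $A$, each pruning step justified by a vertex-degree argument and costing a further factor of $\Delta$, which together account for the remaining $\Delta^3$. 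The technical heart of the proof, and my main anticipated obstacle, is formalizing a suitable notion of \emph{compatibility} among apex vertices and showing that a $\Theta(1/\Delta^3)$-fraction of $A$ can be made pairwise compatible and simultaneously realized along a single non-crossing deformation of $\gamma$.
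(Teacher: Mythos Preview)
Your first two steps are correct and parallel the paper's setup: the apex vertices are exactly the vertices of $T$ whose dual faces are \emph{touched} by $C$, and your bound $|A|\ge \ell/(2\Delta)$ is the paper's observation on touched faces. The gap is in the final step, and it is structural rather than merely a matter of bookkeeping. The obstruction to rerouting $\gamma$ through an apex $v$ is not primarily interference between distinct apices; it is that a single apex $v$ may be \emph{pinched}---$\gamma$ may cross the edges incident to $v$ in two or more disjoint arcs around $v$---and for a pinched vertex there is no local deformation of $\gamma$ through $v$ that stays simple and crosses each edge of $T$ at most once. Only \emph{caressed} vertices (those touched in a single arc) can be pulled onto $\gamma$. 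The paper exhibits Hamiltonian cycles in $\dual T$ that caress exactly four faces while touching $\Theta(\ell)$; every other apex is pinched. For such a $C$, no subset of $A$ of size larger than four is realizable, so a selection/pruning argument that loses only $\mathrm{poly}(\Delta)$ factors cannot work on the given cycle: you must change the cycle itself.

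That is precisely what the paper does. It builds trees $T_0,T_1$ whose nodes group the faces of $\dual T$ on each side of $C$, shows that every leaf contains a caressed face, and proves via a discharging argument that if $C$ caresses $o(\ell/\Delta^4)$ faces then almost every node of $T_0$ and $T_1$ is a degree-$2$ ``bad'' node containing no caressed face. In that regime it locates a local \emph{surgery} on $C$---swapping $O(\Delta)$ faces between interior and exterior---that keeps every previously caressed face caressed and creates at least one new one, while changing the length of $C$ by at most $\Delta^2$. Iterating the surgery $\Omega(\ell/\Delta^2)$ times yields a (possibly shorter) cycle caressing $\Omega(\ell/\Delta^4)$ faces; only at that point does an argument like the one you sketch (take an independent set among caressed vertices via four-colouring and reroute $\gamma$ through them) finish the proof. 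Your proposal is missing this entire surgical mechanism, and the ``compatibility'' framing cannot substitute for it.
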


The dual of a triangulation is a 3-connected cubic planar graph.
The study of the circumference of 3-connected cubic planar graphs
has a long and rich history going back to at least 1884 when Tait
\cite{tait:remarks} conjectured that every such graph is Hamiltonian.  In
1946, Tait's conjecture was disproved by Tutte who gave a non-Hamiltonian
46-vertex example \cite{tutte:on}.  Repeatedly replacing vertices of
Tutte's graph with copies of itself gives a family of graphs, $\langle G_i:i\in
\Z\rangle$ in which $G_i$ has $46\cdot 45^i$ vertices and circumference at
most $45\cdot44^i$.  Stated another way, $n$-vertex members of the
family have circumference $O(n^\alpha)$, for $\alpha=\log_{44}(45) < 0.9941$.
The current best upper bound of this type is due to Gr\"unbaum and
Walther \cite{grunbaum.walther:shortness} who construct a 24-vertex
non-Hamiltonian cubic 3-connected planar graph, resulting in a family
of graphs in which $n$-vertex members have circumference $O(n^{\alpha})$
for $\alpha=\log_{23}(22)< 0.9859$.

A series of results has steadily improved the lower bounds on the
circumference of $n$-vertex  (not necessarily planar) 3-connected
cubic graphs.  Barnette \cite{barnette:trees} showed that, for
every $n$-vertex 3-connected cubic graph $G$, $c(G)=\Omega(\log n)$.
Bondy and Simonovits \cite{bondy.simonovits:longest} improved this bound
to $e^{\Omega(\sqrt{\log n})}$ and conjectured that it can be improved
to $\Omega(n^\alpha)$ for some $\alpha>0$.  Jackson \cite{jackson:longest}
confirmed this conjecture with $\alpha=\log_2(1+\sqrt{5})-1 > 0.6942$.
Billinksi \etal\ \cite{bilinksi.jackson.ea:circumference} improved
this to the solution of $4^{1/\alpha}-3^{1/\alpha}=2$, which implies
$\alpha>0.7532$.  The current record is held by Liu, Yu, and Zhang
\cite{liu.yu.zhang:circumference} who show that $\alpha>0.8$.

It is known that any planar graph of maximum degree $\Delta$ can be
triangulated so that the resulting triangulation has maximum degree
$\lceil 3\Delta/2\rceil+11$ \cite{kant.bodlaender:triangulating}. This
fact, together with \thmref{main} and the result of Liu, Yu, and Zhang
\cite{liu.yu.zhang:circumference}, implies the following corollary:

\begin{cor}\corlabel{main}
  Every $n$-vertex planar graph of maximum degree $\Delta$ contains a
  collinear set of size $\Omega(n^{0.8}/\Delta^4)$.
\end{cor}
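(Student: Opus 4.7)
The plan is to chain together the three ingredients already introduced: the Kant--Bodlaender triangulation result, the Liu--Yu--Zhang circumference lower bound, and \thmref{main}. Starting from an $n$-vertex planar graph $G$ of maximum degree $\Delta$, I would first invoke \cite{kant.bodlaender:triangulating} to obtain a triangulation $T$ on the same vertex set with $E(T)\supseteq E(G)$ and maximum degree $\Delta':=\lceil 3\Delta/2\rceil+11=O(\Delta)$.

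Next I would note that, since $T$ is a triangulation on $n$ vertices, Euler's formula gives $|V(\dual{T})|=2n-4=\Theta(n)$, and, as remarked in the text, $\dual{T}$ is a 3-connected cubic planar graph. Therefore the Liu--Yu--Zhang bound \cite{liu.yu.zhang:circumference} applies and yields $c(\dual{T})=\Omega(n^{0.8})$. Feeding this into \thmref{main} produces a collinear set $S\subseteq V(T)$ of size
\[
    |S|=\Omega\!\left(\frac{n^{0.8}}{(\Delta')^4}\right)=\Omega\!\left(\frac{n^{0.8}}{\Delta^4}\right).
\]

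Finally, I would check that a collinear set of $T$ is automatically a collinear set of $G$: take the non-crossing straight-line drawing of $T$ witnessing the collinearity of $S$ and simply discard the edges of $E(T)\setminus E(G)$. The remaining drawing is a non-crossing straight-line drawing of $G$ in which the vertices of $S$ are still collinear, so $S$ is a collinear set of $G$ of the desired size.

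There is essentially no obstacle here beyond bookkeeping; the whole proof is a composition of the quoted results. The only point that needs a moment's care is that the constant $\Delta'$ coming out of Kant--Bodlaender is linear in $\Delta$, so the $(\Delta')^4$ in the denominator can be absorbed into the $\Omega$ as $\Delta^4$; everything else is immediate from the stated theorems.
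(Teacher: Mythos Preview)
Your proposal is correct and follows essentially the same route as the paper: combine the Kant--Bodlaender triangulation bound, the Liu--Yu--Zhang circumference bound on the dual, and \thmref{main}. You have in fact supplied more detail than the paper does, including the Euler-formula count $|V(\dual{T})|=2n-4$ and the explicit observation that deleting the extra edges of $T$ turns a collinear set for $T$ into one for $G$.
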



It is known that every planar graph $G$ has a collinear set of size
$\Omega(\sqrt{n})$ \cite{bose.dujmovic.ea:polynomial,dujmovic:utility}
. \corref{main} therefore improves on this bound for bounded-degree planar
graphs and, indeed for the family of $n$-vertex planar graphs of maximum
degree $\Delta\in O(n^{\delta})$, with $\delta < 0.075$.  For example,
the triangulations dual to Gr\"unbaum and Walther's construction have
maximum degree $\Delta \in O(\log n)$.  As discussed below, this implies
that there exists $n$-vertex triangulations of maximum degree $O(\log
n)$ whose largest collinear set has size $O(n^{0.9859})$.  \corref{main}
implies that every $n$-vertex planar graph of maximum degree $O(\log n)$
has a collinear set of size $\Omega(n^{0.8})$.

Recently, Dujmovi\'c \etal\ \cite{dujmovic.frati.ea:every} have shown
that every collinear set is \emph{free}. That is, for any planar graph
$G$, any collinear set $S\subseteq V(G)$, and any set $X\subset\R^2$
with $|X|=|S|$, there exists a non-crossing straight-line drawing of $G$
in which the vertices of $S$ are drawn on the points of $X$.  Because of
this, collinear sets have immediate applications in graph drawing and
related areas.  For applications of \corref{main}, including untangling
\cite{cibulka:untangling,pach.tardos:untangling,watanabe:open,goaoc.kratochvil.ea:untangling,kang.pikhurko.ea:untangling,bose.dujmovic.ea:polynomial,dalozzo.dujmovic.ea:drawing,dujmovic:utility,ravsky.verbitsky:on},
column
planarity~\cite{barba.evans.ea:column,evans.kusters.ea:column,dalozzo.dujmovic.ea:drawing,dujmovic:utility},
universal point
subsets~\cite{digiacomo.liotta.ea:how,angelini.binucci.ea:universal,dalozzo.dujmovic.ea:drawing,dujmovic:utility},
and partial simultaneous geometric
drawings~\cite{evans.kusters.ea:column,barba.hoffmann.ea:column,angelini.evans.ea:sefe,blasius.kobourov.ea:simultaneous,dujmovic:utility}
the reader is referred to Dujmovi\'c \cite{dujmovic:utility}
and Dujmovi\'c \etal\ \cite[Section~1.1]{dujmovic.frati.ea:every}.
\corref{main} gives improved bounds for all of these problems for planar
graphs of maximum $\Delta\in o(n^{0.075})$.

For example, it is known that every $n$-vertex planar geometric graph
can be untangled while keeping some set of $\Omega(n^{0.25})$ vertices
fixed \cite{bose.dujmovic.ea:polynomial} and that there are $n$-vertex
planar geometric graphs that cannot be untangled while keeping any set of
$\Omega(n^{0.4948})$ vertices fixed \cite{cano.toth.ea:upper}. Although
asymptotically tight bounds are known for paths \cite{cibulka:untangling},
trees \cite{goaoc.kratochvil.ea:untangling}, outerplanar graphs
\cite{goaoc.kratochvil.ea:untangling}, planar graphs of treewidth
two \cite{ravsky.verbitsky:on}, and planar graphs of treewidth three
\cite{dalozzo.dujmovic.ea:drawing}, progress on the general case has
been stuck for 10 years due to the fact that the exponent $0.25$ comes
from two applications of Dilworth's Theorem.  Thus, some substantially
new idea appears to be needed. By relating collinear/free sets to
dual circumference, the current paper presents an effective new idea.
Indeed, \corref{main} implies that every bounded-degree $n$-vertex
planar geometric graph can be untangled while keeping $\Omega(n^{0.4})$
vertices fixed.  Even for bounded-degree planar graphs,
$\Omega(n^{0.25})$ was the best previously-known lower bound.

Our work opens two avenues for further progress:

\begin{enumerate}
   \item Lower bounds on the circumference of
      3-connected cubic graphs are an active area of research. At the time of writing, the $\Omega(n^{0.8})$ lower bound of Liu, Yu, and Zhang
      \cite{liu.yu.zhang:circumference} is less than a year old.  Any
      further progress on these lower bounds will translate immediately
      to an improved bound in \corref{main} and all its applications.

   \item It is possible that the dependence on $\Delta$ can be removed
      from \thmref{main} and \corref{main}, thus making these results
      applicable to all planar graphs, regardless of maximum degree.
\end{enumerate}

\section{Proof of \thmref{main}}
\seclabel{proof}

Let $G$ be a plane graph.  We treat the vertices of $G$ as points,
the edges of $G$ as closed curves, and the faces of $G$ as closed sets
(so that a face contains all the edges on its boundary and an edge
contains both its endpoints).  Whenever we consider subgraphs of $G$
we treat them as having the same embedding as $G$.  Similarly, if we
consider a graph $G'$ that is homeomorphic\footnote{We say that a graph $G'$ is homeomorphic to $G$ if $G'$ can be obtained from $G$ by repeatedly contracting an edge of $G$ that is incident to a degree-2 vertex.} to $G$ then we assume
that the edges of $G'$---each of which represents a path in $G$
whose internal vertices all have degree 2---inherit their embedding from
the paths they represent in $G$.

Finally, if we consider the dual $\dual{G}$ of $G$ then we treat it
as a plane graph in which each vertex $f$ is represented as a point in
the interior of the face $f$ of $G$ that it represents.  The edges of
$\dual{G}$ are embedded so that an edge $fg$ is contained in the union of
the two faces $f$ and $g$ of $G$, it intersects the interior of exactly
one edge of $G$ that is common to $f$ and $g$, and this intersection
consists of a single point.

A \emph{proper good curve} $C$ for a plane graph $G$ is a
Jordan curve with the following properties:
\begin{enumerate}
	\item[\emph{proper}:] for any edge $xy$ of $G$, $C$ either contains $xy$, intersects
  $xy$ in a single point (possibly an endpoint), or is disjoint
  from $xy$; and
  \item[\emph{good}:] $C$ contains at least one point in the interior of
  some face of $G$.
\end{enumerate}

Da Lozzo \etal\ \cite{dalozzo.dujmovic.ea:drawing} show that proper good
curves define collinear sets:

\begin{thm}\thmlabel{dalozzo}
  In a plane graph $G$, a set $S\subseteq V(G)$ is a collinear set if
  and only if there is a proper good curve for $G$ that contains $S$.
\end{thm}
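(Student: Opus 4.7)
The plan is to prove both directions of the equivalence: that a collinear set induces a proper good curve, and that a proper good curve contains only collinear sets.

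\emph{Necessity.} Suppose $S$ is a collinear set, witnessed by a non-crossing straight-line drawing $D$ of $G$ in which the vertices of $S$ all lie on a line $\ell_0$. I would perturb $\ell_0$ by an arbitrarily small rotation about a point of $S$, obtaining a line $\ell$ that still contains the (finitely many) vertices of $S$ but avoids every vertex of $V(G)\setminus S$ and does not overlap any edge of $G$ other than those edges both of whose endpoints lie in $S$ and which happen to be drawn along $\ell$. For each edge $xy$ of $G$, the straight segment $xy$ in $D$ then either lies entirely on $\ell$, or meets $\ell$ in a single point, or is disjoint from $\ell$. Viewing $D$ as a drawing on the sphere and joining the two rays of $\ell$ at the point at infinity produces a Jordan curve $C$. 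This $C$ is proper by the case analysis just given, and good because $\ell$ is unbounded in the plane and therefore contains points lying in the interior of the outer face of $D$.

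\emph{Sufficiency.} This is the substantive direction. Starting from a proper good curve $C$ for $G$ that contains $S$, I would construct the required drawing in three steps. First, subdivide every edge of $G$ that meets $C$ in a single interior point by inserting a new degree-$2$ vertex at that crossing; let $G_1$ be the resulting plane graph. Then $C$ meets the $1$-skeleton of $G_1$ only at vertices and along whole edges. Let $S_1\supseteq S$ be the vertices of $G_1$ lying on $C$; traversing $C$ encounters these vertices in a cyclic order, and each arc of $C$ between consecutive vertices of $S_1$ either coincides with an edge of $G_1$ or runs through the interior of a face of $G_1$ (the latter occurring at least once because $C$ is good). Second, augment $G_1$ to a plane graph $G_2$ by adding, along each face-interior arc of $C$, a new edge between its two endpoints routed along that arc. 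Now $C$ is realised as a simple cycle $\gamma$ of $G_2$ whose vertex set is $S_1$, and (by flipping to the sphere and choosing the outer face appropriately) we may assume $\gamma$ bounds the outer face of $G_2$.

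The core step is to produce a non-crossing straight-line drawing of $G_2$ in which $\gamma$ is collinear on a prescribed line $L$. I would obtain this by invoking a drawing theorem for plane graphs with prescribed convex outer boundary (Tutte, or a Chrobak--Payne/Kant canonical-ordering variant): for every strictly convex polygon $P$ realising $\gamma$, there is a planar straight-line drawing of $G_2$ whose outer face is $P$. Taking a sequence of such polygons $P_k$ that collapses to a segment on $L$, a compactness argument extracts a limit drawing in which $\gamma$ lies on $L$ and the interior vertices occupy distinct points; planarity is preserved in the limit because each edge is the limit of straight segments whose relative interiors remain disjoint. Finally, suppress the degree-$2$ subdivision vertices of $G_1$ (each lies between two neighbours on a common straight segment, so suppression preserves straight-lineness and planarity) and delete the augmenting edges of $G_2$; the result is a non-crossing straight-line drawing of $G$ in which $S\subseteq S_1$ is collinear on $L$.

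\emph{Main obstacle.} The hard part is the last step of the sufficiency argument: the desired rendering of $\gamma$ is \emph{degenerate}, collapsing the outer cycle onto a line, whereas the standard convex-outer-face drawing theorems require strict convexity. Showing that the limit of drawings on shrinking convex polygons is a genuine non-crossing straight-line drawing---equivalently, that no two interior vertices are forced to collide and no edge is forced to overlap another in the limit---is the technical heart of the argument. A clean workaround is to draw $G_2$ with $\gamma$ realised on a very thin isoceles triangle and then apply a projective transformation sending the base onto $L$ and pushing the apex to infinity; one must verify that the resulting straight-line drawing remains planar, which follows because the projective map preserves incidences and the interior of $\gamma$ in the triangular drawing is mapped into a half-plane bounded by $L$.
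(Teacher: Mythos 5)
This theorem is quoted from Da Lozzo et al.\ \cite{dalozzo.dujmovic.ea:drawing} and is not proved in the present paper, so there is no in-paper argument to compare against. Taking your proposal on its own terms, the necessity direction is essentially right, although the perturbation is both unnecessary and incorrectly described: a rotation of $\ell_0$ about a single point of $S$ keeps only that one point of $S$ on the rotated line, so the remaining vertices of $S$ would leave it. In fact no perturbation is needed. A line meets a straight segment in at most one point unless the segment lies on the line, so $\ell_0$ is already proper; and it is good because the drawing is compact while $\ell_0$ is not. Closing $\ell_0$ on the sphere (or along a large circular arc in the plane) gives the required Jordan curve.

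The sufficiency direction has a genuine gap exactly at the step you flag as the technical heart. You ask for a non-crossing straight-line drawing of $G_2$ in which every vertex of the outer cycle $\gamma$ lies on $L$. Since $\gamma$ is a cycle of length at least $3$ whose edges are straight segments with both endpoints on $L$, every edge of $\gamma$ would lie on $L$, and a cycle of length at least $3$ cannot be realized by segments on a single line without two of them overlapping in their relative interiors. So the target drawing simply does not exist, and no limiting or transformational argument can produce it. Concretely, in the compactness version the barycentric (Tutte-type) placement keeps every interior vertex inside the convex hull of $P_k$, so as $P_k$ collapses onto a subsegment of $L$ the entire vertex set collapses onto $L$, and the ``limit'' has colliding vertices and overlapping edges rather than a drawing. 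In the projective version, the apex of the thin triangle must itself be a vertex of $\gamma$ (otherwise an edge of $\gamma$ would bend there, which a straight segment cannot do), and the projective map sends that vertex to the point at infinity, which is not a legal placement. You also assert without justification that each subdivision vertex lies on the segment joining its two neighbours so that suppression preserves straight-lineness; with $S_1$ on $L$ and the original endpoints of a crossed edge on opposite sides of $L$, this generically fails. The way around all of this, and what the cited proof actually does, is to never force a full cycle onto $L$: split $G$ along $C$ and draw the part of $G$ inside $C$ in the closed upper half-plane and the part outside $C$ in the closed lower half-plane, with a common assignment of the crossing points of $C$ to positions on $L$, so that $S$ becomes collinear without the edges of $\gamma$ ever being realized as segments lying on $L$.
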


For a triangulation $G$, let $v(G)$ denote the size of a largest
collinear set in $G$.  We will show that, for any triangulation $G$
of maximum degree $\Delta$
whose dual is $\dual{G}$, $v(G)=\Theta(c(\dual{G})/\Delta^4)$ by relating proper good curves in $G$ to cycles in $\dual{G}$.

As shown by Ravsky and Verbitsky
\cite{ravsky.verbitsky:on,ravsky.verbitsky:on-arxiv}, the inequality $v(G)
\le c(\dual{G})$ is easy: If $G$ is a triangulation that has a proper
good curve $C$ containing $k$ vertices, then a slight deformation of
$C$ produces a proper good curve that contains no vertices. This curve
intersects a cyclic sequence of faces $f_0,\ldots,f_{k'-1}$ of $G$
with $k'\ge k$.  In this sequence, $f_i$ and $f_{(i+1)\bmod k'}$ share
an edge, for every $i\in\{0,\ldots,k'-1\}$, so this sequence is a closed
walk in the dual $\dual{G}$ of $G$.  The properness of the original curve
and the fact that each face of $G$ is a triangle ensures that $f_i\neq
f_j$ for any $i\neq j$, so this sequence is a cycle in $\dual{G}$ of
length $k'\ge k$.  Therefore, $c(\dual{G})\ge v(G)$. From the result
of Gr\"unbaum and Walther described above, this implies that there are
$n$-vertex triangulations $G$ such that $v(G) = O(n^{0.9859})$.

The other direction, lower-bounding $v(G)$ in terms $c(\dual{G})$
is more difficult. Not every cycle $C$ of length $\ell$ in $\dual{G}$
can be easily transformed into a proper good curve containing a similar
number of vertices in $C$.  In the next section, we describe three
parameters $\tau$, $\rho$, and $\kappa$ of a cycle $C$ in $\dual{G}$
and show that $C$ can always be transformed into a proper good curve
containing $\Omega(\kappa)$ vertices of $G$.

\subsection{Faces that are Touched, Pinched, and Caressed}

Throughout the remainder of this paper, $G$ is a triangulation
whose dual is $\dual{G}$ and $C$ is a cycle in $\dual{G}$.
Refer to \figref{touched_pinched_caressed} for the following definitions.
We say that a face $f$ of $\dual{G}$
\begin{enumerate}
  \item is \emph{touched} by $C$ if $f\cap C\neq \emptyset$;
  \item is \emph{pinched} by $C$ if $f\cap C$ is a cycle or has more than
    one connected component; and
  \item is \emph{caressed} by $C$ if it is touched but not pinched by $C$.
\end{enumerate}

\begin{figure}
  \begin{center}
    \includegraphics{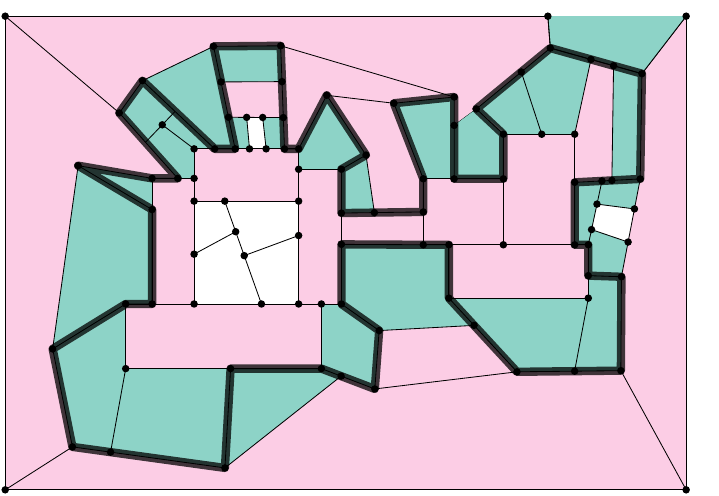}
  \end{center}
  \caption{Faces of $\dual{G}$ that are pinched and caressed by $C$. $C$
  is bold, caressed faces are teal, pinched faces are pink, and untouched
  faces are unshaded.} \figlabel{touched_pinched_caressed}
\end{figure}

Since $C$ is almost always the cycle of interest, we will usually say
that a face $f$ of $\dual{G}$ is touched, pinched, or caressed, without
specifically mentioning $C$.  We will frequently use the values $\tau$,
$\rho$, and $\kappa$ to denote the number of faces of $\dual{G}$ in
some region that are $\tau$ouched, $\rho$inched or $\kappa$aressed.
Observe that, since every face that is touched is either pinched or
caressed, we have the identity $\tau = \rho + \kappa$.

\begin{lem}\lemlabel{cycle_to_curve}
   If $C$ caresses $\kappa$ faces of $\dual{G}$ then $G$ has a proper
   good curve that contains at least $\kappa/4$ vertices so, by
   \thmref{dalozzo}, $v(G)\ge \kappa/4$.
\end{lem}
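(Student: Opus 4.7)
The plan is to modify $C$ into a proper good curve $C'$ that passes through many primal vertices whose dual faces are caressed. For each caressed face $f$ of $\dual{T}$, let $v(f)$ denote the primal vertex of $T$ in the interior of $f$; since $f$ is caressed, $a_f := C\cap f$ is a single arc in $\partial f$. I would construct a \emph{detour} $D_f$ that replaces $a_f$ with a simple curve inside $f$ entering at one endpoint of $a_f$, passing through $v(f)$, and leaving at the other endpoint. The detour can be routed through the two primal triangles of $T$ at the endpoints of $a_f$ (both of which have $v(f)$ as a corner), so it crosses no primal edges and meets the primal edges incident to $v(f)$ only at $v(f)$ itself.

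The main obstacle is that applying all detours simultaneously need not yield a proper curve. If $f$ and $g$ are caressed faces with $u=v(f)$ and $v=v(g)$ adjacent in $T$ via a primal edge $uv$, then the detoured curve would contain both $u$ and $v$ without containing the entire edge $uv$, and would therefore intersect $uv$ at its two endpoints, violating properness.

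To resolve this conflict, I would restrict the detours to an independent set $S$ in the subgraph $H$ of $T$ induced on the caressed primal vertices. Since $H$ is planar, the Four Colour Theorem gives $\alpha(H)\ge \kappa/4$, so we may choose $|S|\ge \kappa/4$. For any two non-adjacent vertices $u,v$ of $T$, no primal triangle contains both, which implies that the dual faces containing them have disjoint closures; hence for $v\in S$ the detour regions are pairwise disjoint and the detours may be applied in parallel to produce a single Jordan curve $C'$.

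The verification that $C'$ is proper and good is then routine. For properness, consider any primal edge $xy$: by independence at most one of $x,y$ lies in $S$, and if, say, $x\in S$ then the detour through $x$ removes the dual edge $(xy)^\star$ from the curve (if it was present), so $C'$ meets $xy$ only at $x$; if neither endpoint is in $S$, the intersection with $xy$ is unchanged from $C$ and is already either empty or a single point. For goodness, each detour contains points in the interior of primal triangles incident to $v(f)$. Finally $C'$ contains every vertex of $S$, so by \thmref{dalozzo} we conclude $v(T)\ge |S|\ge \kappa/4$. The main difficulty, and the source of the factor $1/4$, is precisely the adjacency-based conflict that forces the restriction to an independent set.
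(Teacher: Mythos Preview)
Your proposal is correct and essentially identical to the paper's proof: both apply the Four Colour Theorem to the subgraph of $T$ induced by the caressed vertices to extract an independent set $S$ of size at least $\kappa/4$, then locally reroute $C$ through each $v\in S$ via the two primal triangles at the ends of the caressed arc, and verify properness using independence of $S$. The only cosmetic difference is that the paper describes the detour as replacing the portion of $C$ between its crossings of the two ``outer'' primal edges $w_{i-1}w_i$ and $w_jw_{j+1}$ rather than between the dual endpoints of $a_f$, but the resulting curve and the verification are the same.
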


\begin{proof}
  Let $F$ be the set of faces in $\dual{G}$ that are caressed by $C$. Each
  element $u\in F$ corresponds to a vertex of $G$ so we will treat $F$ as
  a set of vertices in $G$.  Consider the subgraph $G[F]$ of $G$ induced
  by $F$.  The graph $G[F]$ is planar and has $\kappa$ vertices. Therefore,
  by the 4-Colour Theorem \cite{robertson.seymour.ea:four-colour}, $G[F]$
  contains an independent set $F'\subseteq F$ of size at least $\kappa/4$.

  We claim that there is a proper good curve for $G$ that contains all
  the vertices in $F'$.  To see this, first observe that the cycle $C$ in
  $\dual{G}$ already defines a proper good curve (that does not contain
  any vertices of $G$) that we also call $C$.  We perform
  local modifications on $C$ so that it contains all the vertices in $F'$.

  For any vertex $u\in F'$, let $w_0,\ldots,w_{d-1}$ denote the neighbours
  of $u$ in cyclic order.  The curve $C$ intersects some contiguous
  subsequence $uw_i,\ldots,uw_j$ of the edges adjacent to $u$.  Since $u$
  is caressed, this sequence does not contain all edges incident
  to $u$. Therefore, the curve $C$ crosses the edge $w_{i-1}w_i$, then
  crosses $uw_i,\ldots,uw_j$, and then crosses the edge $w_j w_{j+1}$.
  We modify $C$ by removing the portion between the first and last of
  these crossings and replacing it with a curve that contains $u$ and is
  contained in the two faces $w_{i-1}uw_i$ and $w_juw_{j+1}$. (See
  \figref{cycle_to_curve}.)

  \begin{figure}
     \begin{center}
	\begin{tabular}{cc}
		\includegraphics{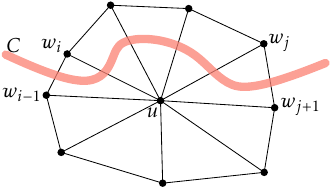} &
		\includegraphics{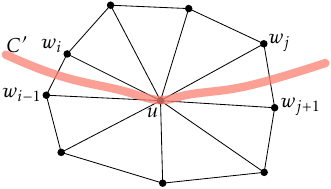}
	\end{tabular}
     \end{center}
     \caption{Transforming the dual cycle $C$ into a proper good curve $C'$ containing $u$.}
	  \figlabel{cycle_to_curve}
  \end{figure}

  After performing this local modification for each $u\in F'$ we have
  a curve $C'$ that contains every vertex $u\in F'$.  All that remains
  is to verify that $C'$ is good and proper for $G$. That $C'$ is good for
  $G$ is obvious.  That $C'$ is proper for $G$ follows from the following two
  observations: (i)~$C'$ does not contain any two adjacent vertices (since
  $F'$ is an independent set); and (ii)~if $C'$ contains a vertex $u$,
  then it does not intersect the interior of any edge incident to $u$.
\end{proof}

\lemref{cycle_to_curve} reduces our problem to finding a cycle in
$\dual{G}$ that caresses many faces.  It is tempting to hope that
any sufficiently long cycle in $\dual{G}$ caresses many faces, but
this is not true; \figref{few_caressed} shows that even a Hamiltonian
cycle $C$ in $\dual{G}$ may caress only four faces, two inside $C$ and
two outside of $C$.  In this example, there is an obvious sequence of
faces $f_0,\ldots,f_k$, all contained in the interior of $C$ where $f_i$
shares an edge with $f_{i+1}$ for each $i\in\{0,\ldots,k-1\}$.  The only
caressed faces in the interior of $C$ are the endpoints $f_0$ and $f_k$ of this sequence.

\begin{figure}
   \begin{center}
       \includegraphics{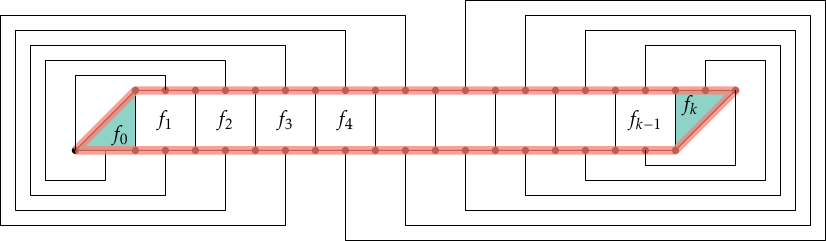}
   \end{center}
   \caption{A Hamiltonian cycle $C$ in $\dual{G}$ that caresses only four faces.}
   \figlabel{few_caressed}
\end{figure}

Our strategy is to define a tree structure, $T_0$ on groups of faces
contained in the interior of $C$ and a similar structure, $T_1$ on groups
of faces in the exterior of $C$.  We will then show that every leaf of
$T_0$ or $T_1$ contains a face caressed by $C$. In \figref{few_caressed},
the tree $T_0$ is the path $f_0,\ldots,f_k$ and, indeed, the leaves $f_0$
and $f_k$ of this tree are caressed by $C$.  After a non-trivial analysis of the trees $T_0$ and $T_1$, we will eventually show that,
if $C$ does not caress many faces, then $T_0$ and $T_1$ have many nodes,
but few leaves.  Therefore $T_0$ and $T_1$ have many degree-2 nodes.
This abundance of degree-2 nodes makes it possible to perform a \emph{surgery}
on $C$ that increases the number of caressed faces.  Performing this
surgery repeatedly will then produce a curve $C$ that caresses many faces.

A path $P=v_1,\ldots,v_r$ in $\dual{G}$ is a \emph{chord
path} (for $C$) if $v_1,v_r\in V(C)$ and $v_2,\ldots,v_{r-1}\not\in
V(C)$.  Note that this definition implies that the interior vertices
$v_2,\ldots,v_{r-1}$ of $P$ are either all contained in the interior of
$C$ or all contained in the exterior of $C$.

\begin{lem}\lemlabel{one_caressed}
   Let $P$ be a chord path for $C$ and let $L$ and $R$ be the two faces
   of the graph formed by $P\cup C$ that each contain $P$ in their boundary. Then $R$
   contains at least one face of $\dual{G}$ that is caressed by $C$.
\end{lem}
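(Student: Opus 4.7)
\emph{Plan.} The plan is to prove the lemma by strong induction on $|R|$, the number of faces of $\dual{T}$ contained in $R$. In the base case $|R|=1$, the region $R$ itself is a face $f$ of $\dual{T}$, so $\partial f=\partial R=P\cup C_R$, where $C_R$ is the sub-arc of $C$ on $\partial R$. Since no edge of $P$ lies on $C$, the intersection $\partial f\cap C=C_R$ is a single connected proper sub-path of $\partial f$, which by definition makes $f$ caressed.

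For the inductive step ($|R|\ge 2$) I would take $g$ to be the face of $\dual{T}$ on the $R$-side of the edge $v_1v_2\in P$. This face lies entirely in $R$ and is touched by $C$, since the $C$-edge at $v_1$ that enters $R$ lies on $\partial g$. If $g$ is caressed, we are done; otherwise $g$ is pinched, so $\partial g\cap C$ has at least two components, and therefore $\partial g$ possesses at least two maximal ``off-$C$ arcs''---sub-paths of $\partial g$ whose interior vertices are off $C$. Letting $B_1$ be the arc containing $v_1v_2$, I pick any other such arc $Q$; by construction $Q$ is itself a chord path for $C$, and since $g\subseteq R$ forces $\partial g\cap C\subseteq C_R$, both endpoints of $Q$ lie on $C_R$.

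Let $R'$ be the face of $Q\cup C$ bounded by $Q$ together with the sub-arc of $C_R$ joining the endpoints of $Q$ (the ``small side'' of $Q$). Then $R'\subseteq\overline{R}$, and $g\not\subseteq R'$: the boundary of $R'$ lies in $Q\cup C_R$ and therefore avoids the edge $v_1v_2\in P$, whereas $g$ is a connected face adjacent to $v_1v_2$, so it must sit in the complementary component of $R\setminus Q$. Consequently $|R'|\le|R|-1$, and applying the induction hypothesis to the chord path $Q$ (with $R'$ playing the role of the lemma's ``$R$'') produces a face of $\dual{T}$ inside $R'\subseteq R$ that is caressed by $C$.

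The delicate step is exactly the strict inequality $|R'|<|R|$: it rests on the combination of $g$ being adjacent to an edge of $P$ and the small side $R'$ of the new chord path having boundary entirely in $Q\cup C_R$, which excludes $P$. The other ingredients---existence of a second off-$C$ arc of a pinched face, the fact that it is a chord path, and that its endpoints lie on $C_R$---follow immediately from the definitions of pinched and off-$C$ arc together with $g\subseteq R$. One mild subtlety to check is that $Q$ may share edges with $P$ other than $v_1v_2$; however, since $B_1$ is chosen to be the arc containing $v_1v_2$ and the off-$C$ arcs are edge-disjoint, one still has $v_1v_2\notin Q$, which is all the argument requires.
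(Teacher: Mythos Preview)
Your proof is correct and follows essentially the same inductive approach as the paper: induct on the number of faces of $\dual{T}$ in $R$, look at the face $g$ incident to the first edge of $P$ on the $R$-side, and recurse on a strictly smaller region bounded by a new chord path coming from $\partial g$. The paper's version is marginally more streamlined in that it does not split into the caressed/pinched cases---it simply removes $f=g$ from $R$ and recurses on any component $X_i$ of $R\setminus f$, noting that each such component has a chord path on its boundary---whereas you explicitly locate a second off-$C$ arc $Q$ of $\partial g$ and recurse on the face of $Q\cup C$ bounded by $Q$ and the sub-arc $C_Q\subseteq C_R$; but the two recursions are doing the same work.
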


\begin{proof}
   The proof is by
   induction on the number, $t$, of faces of $\dual{G}$ contained in $R$.
   If $t=1$, then $R$ is a face of $\dual{G}$ and it is caressed by $C$.

   If $t>1$, then consider the face $f$ of $\dual{G}$ that is contained
   in $R$ and has the first edge of $P$ on its boundary.  Refer to
   \figref{one_caressed}. Since $t>1$, $X=R\setminus f$ is non-empty. The
   set $X$ may have several connected components $X_1,\ldots,X_k$, but
   each $X_i$ has a boundary that contains a chord path $P_i$ for $C$.
   We can therefore apply induction on $P_1$ (or any $P_i$) using $R=X_1$
   in the inductive hypothesis.
  \begin{figure}
     \begin{center}
	\begin{tabular}{cc}
		\includegraphics{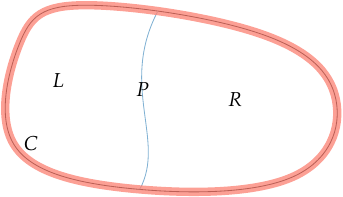} &
		\includegraphics{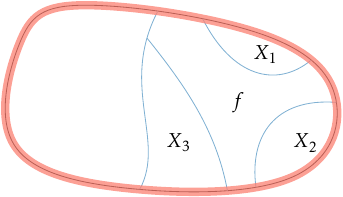}
	\end{tabular}
     \end{center}
	  \caption{The proof of \lemref{one_caressed}.}
	  \figlabel{one_caressed}
  \end{figure}
\end{proof}

\subsection{Auxilliary Graphs and Trees: $H$, $\tilde{H}$, $T_0$, and $T_1$}

Refer to \figref{auxilliary}. Consider the auxilliary
graph $H$ with vertex set $V(H)\subseteq V(\dual{G})$ and whose edge set
consist of the edges of $C$ plus those edges of $\dual{G}$ that belong
to any face pinched by $C$. Let $v_0,\ldots,v_{r-1}$ be the clockwise cyclic sequence of vertices on some face $f$ of $\dual{G}$ that is pinched by $C$.
We identify three kinds of vertices that are \emph{special} with respect to $f$:
(see \figref{keepers}).
\begin{enumerate}
  \item A vertex $v_i$ is special of \emph{Type~A} if $v_{i-1}v_i$ is an edge of $C$ and $v_iv_{i+1}$ is not an edge of $C$.
  \item A vertex $v_i$ is special of \emph{Type~B} if $v_{i-1}v_i$ is not an edge of $C$ and $v_iv_{i+1}$ is an edge of $C$.
  \item A vertex $v_i$ is special of \emph{Type~Y} if $v_i$ not incident to any edge of $C$ and $v_i$ has degree 3 in $H$.
\end{enumerate}

  \begin{figure}
     \begin{center}\begin{tabular}{cc}
		\includegraphics[width=.45\textwidth]{figs/t0t1-2} &
		\includegraphics[width=.45\textwidth]{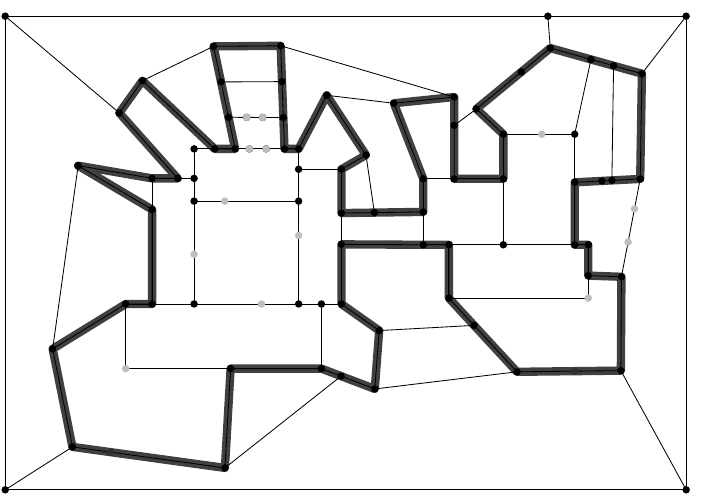} \\
                (a) & (b) \\[1em]
		\includegraphics[width=.45\textwidth]{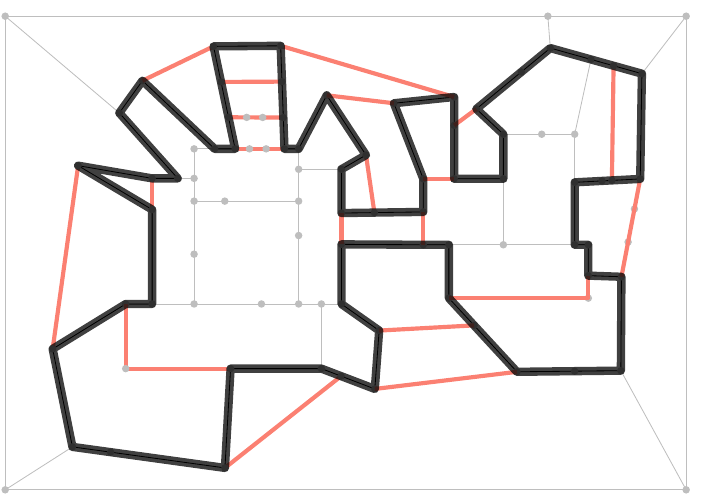} &
		\includegraphics[width=.45\textwidth]{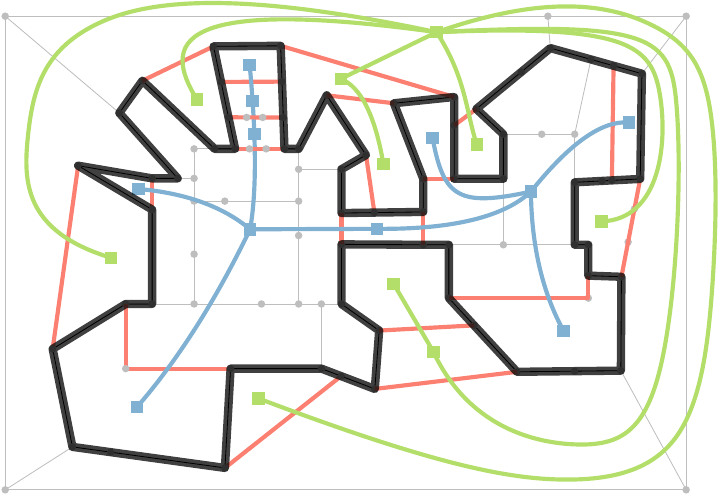} \\
                (c) & (d)
     \end{tabular}\end{center}
	  \caption{(a)~the cycle $C$ in $\dual{G}$ with faces classified as pinched or caressed; (b)~the auxilliary graph $H$; (c)~the auxilliary graph $\tilde{H}$ with keeper paths highlighted; (d)~the trees $T_0$ and $T_1$.}
	  \figlabel{auxilliary}
  \end{figure}

We say that a chord path $v_i,\ldots,v_j$ is a \emph{keeper} with respect to $f$ if $v_i$ is special of Type~A, $v_j$ is special of Type~B, and none of $v_{i+1},\ldots,v_{j-1}$ are special.  We let $\tilde{H}$ denote the subgraph of $H$ containing all the edges of $C$ and all the edges of all paths that are keepers with respect to some pinched face $f$ of $\dual{G}$.

\begin{figure}
   \begin{center}\begin{tabular}{ccc}
        \includegraphics{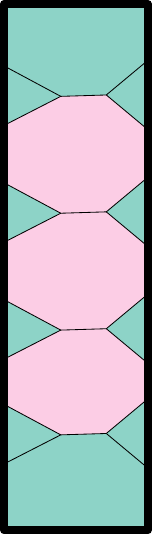} &
        \includegraphics{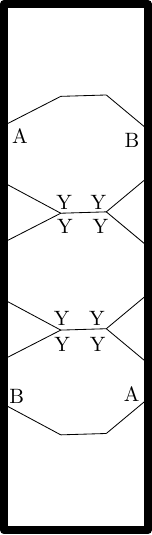} &
        \includegraphics{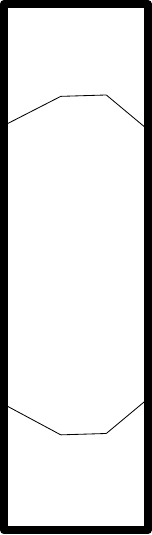} \\
        $\dual{G}$ & $H$ & $\tilde{H}$
   \end{tabular}\end{center}
    \caption{The graphs $\dual{G}$, $H$, and $\hat{H}$ and the classification of special vertices of types $A$, $B$, and $Y$.}
    \figlabel{keepers}
\end{figure}

It is worth emphasizing at this point that, by definition, every keeper is
entirely contained in the boundary of at least one face $f$ of $\dual{G}$.
This property will be useful shortly.

Let $\tilde{H}'$ denote the graph that is homeormophic to $\tilde{H}$
but does not contain any degree 2 vertices.  That is, $\tilde{H}'$
is the minor of $\tilde{H}$ obtained by repeatedly contracting an edge
incident a degree-2 vertex.  The graph $\tilde{H}'$ naturally inherits an
embedding from the embedding of $\tilde{H}$.  This embedding partitions
the edges of $\tilde{H}'$ into three sets:
\begin{enumerate}
  \item The set $B$ of edges that are contained in (the embedding of) $C$;
  \item The set $E_0$ of edges whose interiors are contained in the
  interior of (the embedding of) $C$; and
  \item The set $E_1$ of edges whose interiors are contained in the
  exterior of (the embedding of) $C$.
\end{enumerate}

Observe that, for each $i\in\{0,1\}$, the graph $H_i$ whose edges are
exactly those in $B\cup E_i$ is outerplanar, since all vertices of $H_i$
are on a single face, whose boundary is $C$.  Let $\dual{H_i}$ be dual
of $H_i$ and let $T_i$ be the subgraph of $\dual{H_i}$ whose edges are
all those dual to the edges of $E_i$. From the outerplanarity of $H_i$,
it follows that $T_i$ is a tree.

Each vertex of $T_i$ corresponds to a face of $\tilde{H}$.  From this
point onwards, we will refer to the vertices of $T_i$ as \emph{nodes}
to highlight this fact, so that a node $u$ of $T_i$ is synonymous with
the subset of $\R^2$ contained in the corresponding face of $\tilde{H}$.
In the following, when we say that a node $u$ of $T_i$ contains a face $f$
of $\dual{G}$ we mean that $f$ is one of the faces of $\dual{G}$ whose
union makes up $u$.  The degree, $\delta_u$ of any node $u$ in $T_i$
is exactly equal to the number of keeper paths on the boundary of $u$.

The following lemma allows us to direct our effort towards proving that
one of $T_0$ or $T_1$ has many leaves.

\begin{lem}\lemlabel{one_caressed_leaf}
   Each leaf $u$ of $T_i$ contains at least one face of $\dual{G}$
   that is caressed by $C$.
\end{lem}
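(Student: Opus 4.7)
The plan is to identify the leaf $u$, viewed as a planar region, with one of the two faces of $P\cup C$ appearing in \lemref{one-caressed}, where $P$ is the unique keeper path on the boundary of $u$. Once this identification is made, \lemref{one-caressed} immediately delivers a face of $\dual{T}$ caressed by $C$ that is contained in $u$, which is what we want.

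To execute this, I would first invoke the leaf condition $\delta_u=1$ together with the identity from the text that $\delta_u$ equals the number of keeper paths on $\partial u$, to conclude that exactly one keeper path $P$ lies on $\partial u$, with endpoints $x,y$ on $C$; every other edge of $\tilde{H}$ appearing on $\partial u$ must therefore be a $B$-edge, hence an edge of $C$. Second, I would use that $H_i$ is outerplanar with outer face $C$, so that each of its inner faces is bounded by a cyclic alternation of arcs of $C$ with edges of $E_i$ (which in $\tilde{H}$ correspond to keeper paths). Specializing to $u$, where only the single keeper path $P$ is available, this alternation collapses to $P$ traversed from $x$ to $y$ followed by a single arc of $C$ from $y$ back to $x$. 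This identifies $u$ with one of the two faces of $P\cup C$ adjacent to $P$---exactly the region $R$ (or $L$) of \lemref{one-caressed}.

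With this identification in hand, I would apply \lemref{one-caressed}, taking $R:=u$ (swapping the labels $L,R$ if necessary, since both faces adjacent to $P$ satisfy the hypothesis by symmetry), to obtain a face of $\dual{T}$ caressed by $C$ and contained in $u$. The main obstacle I anticipate is the outerplanarity/boundary-tracing step: one must rule out the possibility that the $C$-portion of $\partial u$ consists of several disjoint arcs, which would prevent $u$ from being expressible as a face of $P\cup C$ and defeat the direct appeal to \lemref{one-caressed}. The outerplanarity of $H_i$, combined with the leaf condition $\delta_u=1$ forcing $P$ to be the unique chord touching $u$, is precisely what rules this out.
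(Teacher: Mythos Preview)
Your proposal is correct and follows essentially the same approach as the paper: identify the unique keeper/chord path $P$ on the boundary of the leaf $u$, recognize $u$ as one of the two faces of $P\cup C$ incident to $P$, and invoke \lemref{one-caressed} with $R=u$. The paper's proof is terser---it simply asserts that $u$ is one of these two faces---whereas you spell out the outerplanarity argument that justifies this identification; but the underlying idea is identical.
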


\begin{proof}
   The edge of $T_i$ incident to $u$ corresponds to a chord path $P$. The
   graph $P\cup C$ has two faces with $P$ on its boundary, one of which
   is $u$.  The lemma now follows immediately from \lemref{one_caressed},
   with $R=u$.
\end{proof}

We will make use of the following well-known property of
3-connected plane graphs.

\begin{lem}\lemlabel{one_shared_edge}
   If $G$ has $n\ge 4$ vertices then any two faces of $\dual{G}$ share at
   most one edge.
\end{lem}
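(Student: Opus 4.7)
The plan is to pull the statement back through the standard planar duality and reduce it to the fact that $T$ is a simple graph. Using the dual embedding as set up earlier in the section, the edges of $\dual{T}$ are in bijection with the edges of $T$ (each primal edge $e$ is crossed exactly once by its dual edge $e^\star$), and the faces of $\dual{T}$ are in bijection with the vertices of $T$ (each primal vertex $u$ lies in the interior of exactly one face $f_u$ of $\dual{T}$). The key compatibility is that, under these bijections, a dual edge $e^\star$ lies on the boundary of a face $f_u$ exactly when the corresponding primal edge $e$ is incident to the vertex $u$ in $T$.

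Given this dictionary, the proof essentially writes itself. Take two faces $f_u$ and $f_v$ of $\dual{T}$ corresponding to primal vertices $u, v \in V(T)$. A dual edge $e^\star$ lies on the boundary of both $f_u$ and $f_v$ if and only if its primal counterpart $e$ has both $u$ and $v$ as endpoints, i.e., $e$ is an edge of $T$ joining $u$ and $v$. Since $T$ is a simple graph, at most one such primal edge exists, so $f_u$ and $f_v$ share at most one dual edge.

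The hypothesis $n \ge 4$ is used only to make this bookkeeping rigorous: in that regime the triangulation $T$ is $3$-connected, so $\dual{T}$ is a simple, cubic, $3$-connected plane graph, and in particular bridgeless and $2$-connected. This guarantees that each dual edge really does lie on the boundary of exactly two distinct faces, and that the face-vertex bijection above is well defined, so the phrase ``two faces of $\dual{T}$ share an edge'' has its intended meaning. The only substantive step is confirming this standard duality setup; once that is in hand, simplicity of $T$ yields the conclusion immediately, and I do not expect any real obstacle.
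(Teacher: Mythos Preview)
Your argument is correct but takes a genuinely different route from the paper's. The paper works directly inside $\dual{T}$: if two faces share two edges $e_1,e_2$, then $\{e_1,e_2\}$ is an edge cutset of $\dual{T}$, and since $\dual{T}$ has $2n-4\ge 4$ vertices this yields a $2$-vertex cutset, contradicting the $3$-connectedness of $\dual{T}$. You instead pull the statement back through the double-dual dictionary (faces of $\dual{T}\leftrightarrow$ vertices of $T$, edges of $\dual{T}\leftrightarrow$ edges of $T$, with incidences matching) and read it off from the simplicity of $T$. Your approach is arguably more conceptual---it explains \emph{why} the bound is exactly one---while the paper's argument is shorter and entirely self-contained in $\dual{T}$, avoiding any need to verify that the duality dictionary behaves as expected. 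One small point you should make explicit in a full write-up: the edge bijection you rely on presupposes that no two triangular faces of $T$ share more than one primal edge (otherwise the paper's \emph{simple} dual would collapse parallel dual edges and the bijection would fail). This is immediate for $n\ge 4$, but it is not literally part of the embedding conventions you cite, so it deserves a sentence.
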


\begin{proof}
   Suppose that two faces $f$ and $g$ share two edges $e_1$ and
   $e_2$. Then $e_1$ and $e_2$ form an edge cutset of $\dual{G}$.
   If $\dual{G}$ contains at least four vertices, then two of the
   endpoints of $e_1$ and $e_2$ form a vertex cutset of $\dual{G}$
   of size 2, contradicting the fact that $\dual{G}$ is 3-connected.
   That $\dual{G}$ contains at least four vertices follows from Euler's
   Formula, which gives the number of vertices in $\dual{G}$ as $2n-4\ge
   4$ for all $n\ge 4$.
\end{proof}

Note that, as should be evident from \figref{keepers}, the number of faces in $\tilde{H}$ is not lower bounded by any function of the number of faces in $H$ and therefore the number of nodes in $T_0$ and $T_1$ is not lower bounded by any function of $\ell$.  Indeed, a single face of $\tilde{H}$ may contain arbitrarily many faces of $\dual{G}$ that are touched by $C$.  The following important lemma shows that, when this happens, the corresponding node in $T_0$ or $T_1$ either has high degree or contains many faces of $\dual{G}$ that are caressed by $C$.  The latter case is obviously good for our purposes. The former case is also good because a vertex of degree $\delta$ in any tree creates $\delta-2$ leaves and, by \lemref{one_caressed_leaf}, each leaf contains at least one caressed face.

For a node $u$ of $T_i$, we let $\tau_u$, $\rho_u$, $\kappa_u$, and $\delta_u$ denote the number of touched face of $\dual{T}$ in $u$, pinched faces of $\dual{G}$ in $u$, the number of caressed faces of $\dual{G}$ in $u$, and the degree of $u$ in $T_i$, respectively.

\begin{lem}\lemlabel{many_caressed_or_high_degree}
   For any node $u$ of $T_i$, $\rho_u \le 2(\kappa_u+\delta_u)$.
\end{lem}

Before proving \lemref{many_caressed_or_high_degree}, we point out that
the leading constant 2 is tight. \figref{tight} shows an example in
which all $\rho_u=2k+1$ pinched faces of $\dual{G}$ are contained in a
single (pink) node $u$ of $T_0$ that contains $\kappa_u=0$ caressed faces and
has degree $\delta_u=k+2$.

\begin{figure}
  \begin{center}
    \includegraphics{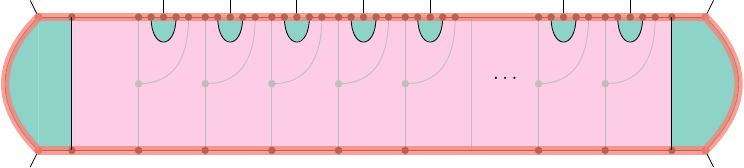}
  \end{center}
  \caption{An example showing the tightness of \lemref{many_caressed_or_high_degree}.}
  \figlabel{tight}
\end{figure}


\begin{proof}[Proof of \lemref{many_caressed_or_high_degree}]
   The proof is a discharging argument.  We assign each pinched face in
   $u$ a single unit of charge, so that the total charge is $\rho_u$.
   We then describe a discharging procedure that preserves the total
   charge and such that, after executing this procedure, the folowing conditions are satisfied:
   \begin{compactenum}[(Post1)]
        \item Each pinched face has no charge.
        \item Each caressed face has charge at most 2.
        \item Each keeper path has charge at most 2.
    \end{compactenum}
    Since there is a bijection between keeper paths in $u$ and edges of $T_i$ incident to $u$, this proves the result.

    The discharging procedure is made up of two routines, an \emph{initialization procedure} and a \emph{recursive procedure}.  The recursive procedure takes inputs $(L,R,P,c)$, where $P$ is a chord path, $L,R\subseteq u$, $L\cap R=P$, $L$ contains at least one face of $\dual{G}$, and $0\le c\le 2$ is a charge that we think of as resting on $P$.  The input $(L,R,P,c)$ must satisfy the following conditions:
   \begin{compactenum}[(Pre1)]
        \item Each face of $\dual{G}$ in $L$ that shares an edge with $P$
        is pinched.
        \item If $c>1$ then $P$ is contained in the boundary of a single face of $\dual{G}$ that is contained in $L$.
    \end{compactenum}
    The procedure guarantees that, after its completion, the charge of $c$ that was resting on $P$ has been moved into $R$, any other charges in $L$ are undisturbed, and the faces contained in $R$ satisfy (Post1)--(Post3).

    Before defining the recursive procedure itself, we will show how it is used by the initialization procedure.  This initialization procedure takes an arbitrary pinched face $f$ contained in $u$.  Since $f$ is pinched, it has $r\ge 2$ chord paths $P_1,\ldots,P_r$ on its boundary. For each $i\in\{1,\ldots,r\}$, let $L_i^-$ be the component of $u\setminus P_i$ that contains $f$, let $L_i=L_i^-\cup P_i$, and let $R_i=u\setminus L_{i^-}$.
    This initialization procedure guarantees that, after it runs, all the faces and chord paths in $u\setminus R_1$ satisfy (Post1)--(Post3) but does not modify charges on faces and keeper paths in $R_1$.

    The initialization procedure works as follows: Since $f$ is pinched it has a charge of 1 so we move the charge from $f$ onto $P_2$ and apply the recursive procedure to $(L_2,R_2,P_2,1)$.  Since $f$ is pinched, this satisifies (Pre1) and since the final argument $c=1$ this satisfies (Pre2).  Once these recursive procedures are complete conditions (Post1)--(Post2) are satisfied for all faces in $f\cup R_2$.

    Next, we apply the recursive procedure on $(L_i,R_i,P_i,0)$ for each $i\in\{3,\ldots,r\}$.  Since $f$ is a pinched face, this satisifies (Pre1) and since the final argument $c=0$ this satisfies (Pre2).  Once the recursive procedure is complete conditions (Post1)--(Post2) are satisfied for all faces in $f\cup R_i$ and does affect any charges in $R_1$.

    Since every face and keeper path contained in $u$ is contained in $R_i$ for at most one $i$, the initialization procedure produces a distribution of charges that satisifies (Post1)--(Post3) for $u\setminus R_1$, as required.

    Next we describe the recursive discharging procedure that takes $(L,R,P,c)$ satisifying (Pre1) and (Pre2) and moves charges in $R$, and the charge $c$ resting on $P$, so that they satisfy (Post1)--(Post3).  There are several cases to consider (see \figref{discharging}):
	\begin{figure}
		\begin{center}
		\begin{tabular}{cc}
			\includegraphics{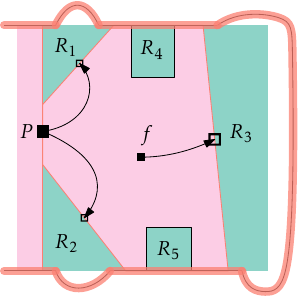} &
			\includegraphics{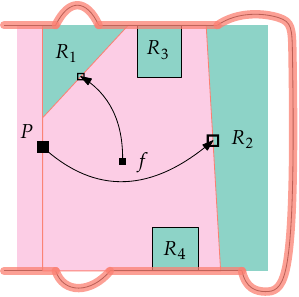} \\
			 2.a & 2.b \\[1.5em]
			\includegraphics{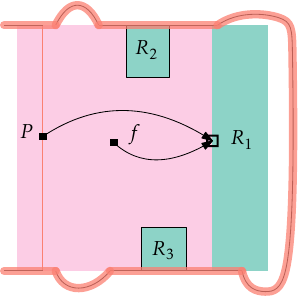} &
			\includegraphics{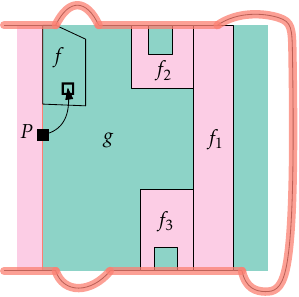} \\
			 2.c & 3
		\end{tabular}
		\end{center}
		\caption{Discharging steps in the proof of
		\lemref{many_caressed_or_high_degree}.}
		\figlabel{discharging}
	\end{figure}
  \begin{enumerate}
     \item $R$ contains no face of $\dual{G}$ that is pinched by $C$.
     If $R$ contains no face of $\dual{G}$ at all, then $R=P$ is a keeper path,
     in which case we leave a charge of $c$ on it and we are done.
     Otherwise $R$ contains at least one face of $\dual{G}$ and \lemref{one_caressed}  ensures that $R$ contains at least one caressed face $f$.  We move the charge from $P$ onto $f$ and we are done.

     \item $R$ contains a face $f$ of $\dual{G}$ that is pinched by $C$ and that shares
     at least one edge with $P$.  We consider three subcases, each illustrated in \figref{discharging}:
     \begin{enumerate}
	\item $f$ contains neither endpoint of $P$. In this case,
	$R\setminus f$ has two distinct components, $R_1^-$ and $R_2^-$ each containing a distinct endpoint of $P$.  For each $i\in\{1,2\}$, let $P_i$ be the chord path that separates $R_i^-$ from $u\setminus R_i^-$.  Since $f$ is pinched, $f$ contains $r\ge 3$ chord paths $P_1,\ldots,P_{r}$.  Indeed, if $P_1$ and $P_2$ were the only chord paths on $f$, then $f$ would be caressed.  For each $i\in\{1,\ldots,r\}$, let $L_i^-=u\setminus P_i$, let $L_i=L_i^-\cup P_i$, and let $R_i=u\setminus L_i^-$.

    We split the charge $c$ on $P$ evenly between $P_1$ and $P_2$ and apply the recursive procedure on $(L_i,R_i,P_i,c/2)$ for each $i\in\{1,2\}$. Next, we move the charge on $f$ to $P_3$ and apply the recursive procedure on $(L_3,R_3,P_3,1)$.  Finally, we apply the recursive procedure on $(L_i,R_i,P_i,0)$ for each $i\in\{4,\ldots,r\}$.

    The recursive call $(L_1,R_1,P_1,c/2)$ satisfies (Pre1) because the path $P_1$ used in this recursive call is contained in the boundary of $f$ and $P$. In particular each face of $\dual{G}$ contained in $u\setminus R_1$ that is incident to $P_1$ is either in $L$ and incident to $P$ or is the face $f$.  The latter faces are pinched by (Pre1) and $f$ is pinched by definition.  The recursive call on $(L_1,R_1,P_1,c/2)$ also satisifies (Pre2) since $c\le 2$, so $c/2\le 1$.  The same argument shows that the recursive call on $(L_2,R_2,P_2,c/2)$ satisfies (Pre1) and (Pre2).

    For each $i\in\{3,\ldots,r\}$, the recursive call on $(L_i,R_i,P_i,\star)$ satisifies (Pre1) because $P_i$ is contained in $f$ and $f$ is pinched and satisfies (Pre2) because the final argument is $1$ for $i=3$ and $0$ for $i\in\{4,\ldots,r\}$.

	\item $f$ contains exactly one endpoint of $P$.  In this case, $R\setminus f$ has one connected component $R_1^-$ that contains an endpoint of $P$.  Since $f$ is pinched, $f$ has $r\ge 2$ chord paths $P_1,\ldots,P_r$ on its boundary, where $P_1$ separates $R_1^-$ from $u\setminus R_1$. Define $L_1,\ldots,L_r$, $R_1,\ldots,R_r$, and $P_2,\ldots,P_r$ as in the previous case.

    Because $f$ is pinched, it has one unit of charge on it, that we move onto $P_1$ before calling the recursive procedure on $(L_1,R_1,P_1,1)$. This satisfies (Pre1) for the same reasons described in the previous case and satisfies (Pre2) because the final argument is $1$.

    The path $P$ has a charge $c\le 2$ which we move onto $P_2$ and call the recursive procedure on $(L_2,R_2,P_2,c)$.  This recursive call satisfies (Pre1) because $f$ is pinched and it satisfies (Pre2) because $P_2$ is entirely contained in the boundary of $f$.

    Finally, for each $i\in\{3,\ldots,r\}$, we call the recursive procedure on $(L_i,R_i,P_i,0)$. Clearly each of these calls also satisfies (Pre1) and (Pre2).

    \item $f$ contains both endpoints of $P$.  We claim that, in this case, $P$ must be on the boundary of more than one face in $L$, otherwise $P$ would be a keeper path. To see this, observe that the face $f$ contains both the first edge $e_1$ and last edge $e_2$ of $P$. If $e_1=e_2$ because $P$ is a single edge, then it is certainly a keeper, which is not possible since $P$ is in the interior of $u$. Otherwise, by \lemref{one_shared_edge}, $e_1$ and $e_2$ are on the boundary of two different faces in $L$.

    Therefore, by (Pre2) $P$ has $c\le 1$ units of charge assigned to it.
    Now, since $f$ is pinched, it has $r\ge 1$ chord paths $P_1,\ldots,P_r$, other than $P$ on its boundary.  Define $L_1,\ldots,L_r$ and $R_1,\ldots,R_r$ as in the previous two cases.  Now, $P$ has a charge $c\le 1$ and, since it is pinched, $f$ has a charge of 1.  We move these $c+1$ units of charge from $P$ and $f$ onto $P_1$ and call the recursive procedure on $(L_1,R_1,P_1,c+1)$.  This satisfies (Pre1) since $f$ is pinched and satisifies (Pre2) since $P_1$ is entirely contained in the boundary of $f$.

    For each $i\in\{2,\ldots,r\}$ we then call the recursive procedure on $(L_i,R_i,P_1,0)$. Clearly each of these calls satisfies (Pre1) and (Pre2).

  \end{enumerate}
  \item $R$ contains at least one pinched face of $\dual{G}$, but no pinched face in $R$ shares an edge with $P$.  We claim that there is a single face, $g$ of $H$, contained in $R$, that contains all of $P$ on its boundary.
  Indeed, edges of $\dual{G}$ not in $C$ are in $H$ only if they are on the boundary of some pinched face of $\dual{G}$.  Since no pinched face of $\dual{G}$ in $R$ shares an edge with $P$, none of the edges incident to internal vertices of $P$ and contained in $R$ are part of $H$.  Therefore, $P$ is on the boundary of a single face of $H$ that is contained in $R$.

  Let $f$ be the face of $\dual{G}$ that is contained in $R$ and that contains the first edge of $P$.  The face $f$ is touched by $C$ but not pinched, so it must be caressed.  We move the $c$ units of charge from $P$ onto $f$.

  Now, $R$ still contains one or more pinched faces  $f_1,\ldots,f_k$, such that each $f_i$ shares part of a chord path $P_i$ with $g$.  Consider one such $f_i$ and observe that $u\setminus f_i$ has $r_i\ge 2$ chord paths $P_{i,1},\ldots,P_{i,r_i}$ on its boundary and use the convention that $P_{i,1}=P_i$.  Define $L_{i,1},\ldots,L_{i,r_i}$ and $R_{i,1},\ldots,R_{i,r_i}$ in a manner analagous to $L_1,\ldots,L_r$ and $R_1,\ldots,R_r$ in the previous cases.

  On each such face $f_i$, we run the \emph{initialization} procedure and this reorganizes the charges in $u\setminus R_{i,1}$ so that they satisfy (Post1)--(Post3) and does not modify charges in $L\cup g$.  Doing this for each $i\in\{1,\ldots,k\}$ completes the description of the discharging procedure.
\end{enumerate}
  To complete the proof, first observe that if $u$ contains no pinched faces then the result is trivially true. Otherwise $u$ contains a pinched face $f$ such that one of the components $R_1$ of $u\setminus f$ contains no pinched faces.  (The existence of such an $f$ is established by choosing $f$ so that the minimum number of faces in any component of $u\setminus f$ is minimum over all pinched faces $f$ in $u$.)  Since $R_1$ contains no pinched faces, it contains no charges, so it already satisfies (Post1)--(Post3). Running the initialization procedure on $f$ will then redistribute charges so that they satisfy (Post1)--(Post3) for all faces and keeper paths in $u$.
\end{proof}

\subsection{Bad Nodes}


We say that a node of $T_i$ is \emph{bad} if it has degree 2 and
contains no face of $\dual{G}$ that is caressed by $C$.  We now move
from studying individual nodes of $T_0$ and $T_1$ to studying global
quantities associated with $T_0$ and $T_1$.  From this point on, for
each $i\in\{0,1\}$,
\begin{enumerate}
  \item $\tau_i$, $\rho_i$, and $\kappa_i$ refer the total numbers of faces contained in nodes of $T_i$ that are touched, pinched, and caressed by $C$, respectively;
  \item $n_i$ refers to the number of nodes of $T_i$;
  \item $\delta_i = 2(n_i-1)$ is the total degree of all nodes in $T_i$; and
  \item $b_i$ is the number of bad nodes in $T_i$.
\end{enumerate}

\begin{lem}\lemlabel{few_caressed_implies_many_nodes}
  If $\kappa_i \le \tau_i/6$ then $n_i\ge \tau_i/8$.
\end{lem}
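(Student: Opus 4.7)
The plan is to derive the bound by summing the per-node inequality of \lemref{many-caressed-or-high-degree} over all nodes of $T_i$ and then combining it with the handshake identity on the tree $T_i$.

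First I would write, for each node $u$ of $T_i$, the inequality $\rho_u \le 2(\kappa_u + \delta_u)$ given by \lemref{many-caressed-or-high-degree}. Summing over all $u\in V(T_i)$, and using that $\sum_u \rho_u = \rho_i$, $\sum_u \kappa_u = \kappa_i$, and $\sum_u \delta_u = \delta_i = 2(n_i-1)$, this gives
\begin{equation*}
  \rho_i \le 2\kappa_i + 2\delta_i = 2\kappa_i + 4(n_i-1).
\end{equation*}

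Next I would use the identity $\tau_i = \rho_i + \kappa_i$ from the beginning of the section to rewrite this as
\begin{equation*}
  \tau_i = \rho_i + \kappa_i \le 3\kappa_i + 4(n_i-1).
\end{equation*}
Now the hypothesis $\kappa_i \le \tau_i/6$ kicks in: substituting yields $\tau_i \le \tau_i/2 + 4(n_i-1)$, hence $\tau_i/2 \le 4(n_i-1) < 4n_i$, and so $n_i > \tau_i/8$, which gives the desired inequality $n_i \ge \tau_i/8$.

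There is no real obstacle here; the only thing to be careful about is that the partition of touched faces into pinched and caressed faces is preserved when we restrict to a single node (so the per-node quantities $\rho_u$ and $\kappa_u$ sum correctly to $\rho_i$ and $\kappa_i$), and that $\sum_u \delta_u$ indeed equals $2(n_i-1)$ because $T_i$ is a tree. Both facts are immediate from the definitions given earlier in the section.
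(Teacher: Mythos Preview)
Your proof is correct and follows essentially the same approach as the paper: sum the per-node inequality from \lemref{many-caressed-or-high-degree}, use $\tau_i=\rho_i+\kappa_i$ and $\delta_i=2(n_i-1)$, then apply the hypothesis $\kappa_i\le\tau_i/6$ and rearrange. The paper is slightly terser (it states the summed inequality $\rho_i\le 2(\kappa_i+\delta_i)$ directly rather than explicitly summing over nodes), but the argument is the same.
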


\begin{proof}
	From \lemref{many_caressed_or_high_degree} we know $\rho_i \le 2(\kappa_i+\delta_i)$, so
  \[
  \tau_i  = \kappa_i + \rho_i
     \le 3\kappa_i + 2\delta_i
     = 3\kappa_i + 4(n_i-1)
     \le \tau_i/2 + 4n_i \enspace ,
  \]
  and reorganizing the left- and right-hand sides gives the desired result.
\end{proof}

\begin{lem}\lemlabel{most_nodes_are_bad}
   For any $0<\epsilon < 1$, if $b_i \le (1-\epsilon)n_i$, then
   $\kappa_i \ge \epsilon\tau_i/24$.
\end{lem}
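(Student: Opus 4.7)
The plan is to split the non-bad nodes of $T_i$ into three types---leaves, internal nodes of degree at least $3$, and degree-$2$ nodes that happen to contain a caressed face---and then combine the elementary tree-leaf inequality with \lemref{one-caressed-leaf} and the $\tau_i$-estimate from (the proof of) \lemref{few-caressed-implies-many-nodes}.

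In detail, let $L_i$ be the number of leaves of $T_i$, let $D_i$ be the number of nodes of $T_i$ of degree at least $3$, and let $K_i$ be the number of degree-$2$ nodes of $T_i$ that contain at least one face of $\dual{T}$ caressed by $C$. A degree-$2$ node is bad exactly when it is not counted by $K_i$, so (assuming $n_i\ge 2$, which is the only interesting case) the hypothesis $b_i \le (1-\epsilon) n_i$ rearranges to
\[
L_i + D_i + K_i \;\ge\; \epsilon\, n_i\,.
\]
The degree-sum identity $\sum_u \deg_{T_i}(u) = 2(n_i - 1)$ combined with the degree classification yields $L_i \ge D_i$ (in fact $L_i \ge D_i + 2$ whenever $D_i\ge 1$), so $2L_i + K_i \ge \epsilon n_i$ and hence $L_i + K_i \ge \epsilon n_i / 2$.

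Next I would observe that each of these $L_i + K_i$ nodes contributes at least one caressed face of $\dual{T}$ to $\kappa_i$: leaves do so by \lemref{one-caressed-leaf}, and the $K_i$-nodes by definition. These caressed faces are pairwise distinct, since distinct nodes of $T_i$ correspond to distinct faces of $\tilde H$ and therefore contain disjoint families of faces of $\dual{T}$. Hence $\kappa_i \ge L_i + K_i \ge \epsilon n_i / 2$. To turn this $n_i$-bound into a $\tau_i$-bound I would reuse the chain of inequalities already used to prove \lemref{few-caressed-implies-many-nodes}: summing \lemref{many-caressed-or-high-degree} over $u \in V(T_i)$ gives $\tau_i \le 3\kappa_i + 4n_i$, so either $\kappa_i \ge \tau_i/6$ and we are done, or $n_i \ge \tau_i/8$ and the previous display yields $\kappa_i \ge \epsilon\tau_i/16$. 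Either way $\kappa_i = \Omega(\epsilon\tau_i)$.

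The one place that requires a bit of thought is the refinement of ``non-bad'': one cannot simply lower-bound $\kappa_i$ by the number of non-bad nodes, because degree-$\ge 3$ internal nodes need not contain any caressed face. The trick is to convert those internal nodes into extra leaves via the tree-leaf inequality $L_i \ge D_i$, which costs only a factor of $2$ in the final bound. Once this decomposition is in place, everything else is mechanical combination of the preceding lemmas, and I would not expect any further obstacle.
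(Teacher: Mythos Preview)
Your proposal is correct and follows essentially the same route as the paper: partition the non-bad nodes by degree, use the tree inequality $L_i\ge D_i$ to absorb the degree-$\ge 3$ nodes into the leaves, invoke \lemref{one-caressed-leaf} and the definition of $K_i$ to get $\kappa_i\ge L_i+K_i$, and finish via \lemref{few-caressed-implies-many-nodes}. Your bookkeeping is in fact slightly tighter than the paper's (you obtain $\kappa_i\ge\epsilon n_i/2$ and hence $\kappa_i\ge\epsilon\tau_i/16$, whereas the paper bounds $|N_1|$ and $|N_2|$ separately by $\kappa_i$ and ends up with $\kappa_i\ge\epsilon n_i/3$ and $\epsilon\tau_i/24$), but the argument is otherwise the same.
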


\begin{proof}
   Partition the nodes of $T_i$ into the following sets:
   \begin{enumerate}
       \item the set $B$ of bad nodes;
       \item the set $N_1$ of leaves;
       \item the set $N_{\ge 3}$ of nodes having degree at least 3;
       \item the set $N_2$ of nodes having degree 2 that are not bad.
   \end{enumerate}
   Then
   \begin{align*}
     b_i & = n_i - |N_1|- |N_{\ge 3}|-|N_2| \\
	   & > n_i - 2|N_1|-|N_2| & \text{since $|N_1|> |N_{\ge 3}|$} \\
         & \ge  n_i - 2\kappa_i -|N_2|
	   & \text{(since, by \lemref{one_caressed_leaf}, $\kappa_i\ge |N_1|$)} \\
           & \ge  n_i - 3\kappa_i
           & \text{(since each node in $N_2$ contains a caressed face)}
    \end{align*}
    Thus, we have
    \[
          n_i-3\kappa_i \le b_i \le (1-\epsilon)n_i
    \]
    and rewriting gives
    \begin{equation}
      \kappa_i \ge \epsilon n_i/3 \enspace . \eqlabel{blech}
    \end{equation}
    If $\kappa_i \ge \tau_i/6$, then the proof is complete since $\tau_i/6>\tau_i/24$.  On the other hand, if $\kappa_i \le \tau_i/6$ then, by \lemref{few_caressed_implies_many_nodes}, $n_i \ge \tau_i/8$. Combining this with \eqref{blech}  gives
    \[
      \kappa_i \ge \epsilon n_i/3 \ge \epsilon\tau_i/24  \enspace . \qedhere
    \]
\end{proof}

\subsection{Interactions Between Bad Nodes}

We have now reached a point in which we know that the vast
majority of nodes in $T_0$ and $T_1$ are bad nodes, otherwise
\lemref{most_nodes_are_bad} implies that a constant fraction of the
faces touched by $C$ are caressed by $C$.  At this point, we are ready
to study interactions between bad nodes of $T_0$ and bad nodes of $T_1$.

\begin{lem}\lemlabel{common_face}
   If $u$ is a bad node then $u$ is a face of $\dual{G}$.
\end{lem}

\begin{proof}
   First observe that, since $u$ is bad, it has degree 2, so $C\cap u$ has exactly two connected components $C_1$ and $C_2$. Thus $u$'s boundary
   consists of $C_1$, $C_2$ and two chord paths $P_1$ and $P_2$.  We first argue that there is a single face $g$ of $\dual{G}$ that contains $C_1\cup C_2$.  If not, then $\dual{G}$ must contain a path $P$ whose interior is in $u$ and has both endpoints on the boundary of $u$.  There are a few cases to consider:
   \begin{figure}
       \begin{center}
           \centering{
           \begin{tabular}{cc}
               \includegraphics{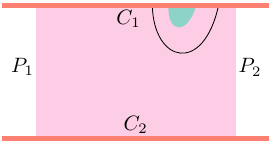} &
               \includegraphics{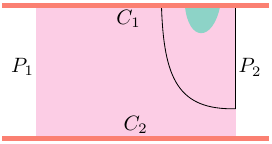} \\
               Case 1 & Case 2 \\[1ex]
               \includegraphics{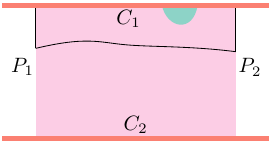} &
               \includegraphics{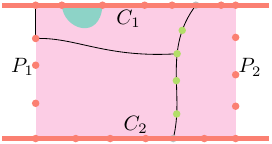} \\
               Case 3 & Case 4 \\[1ex]
               \multicolumn{2}{c}{\includegraphics{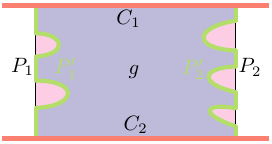}}
          \end{tabular}
           }
           \caption{Cases in the proof of \lemref{common_face}}
           \figlabel{common_face}
       \end{center}
   \end{figure}
   \begin{enumerate}
       \item $P$ has both endpoints on $C_i$ for some $i\in\{1,2\}$. In
       this case, $P$ is a chord path and, by \lemref{one_caressed}
       $u$ contains a face that is caressed by $C$, contradicting the
       assumption that $u$ is a bad node.

       \item $P$ has one endpoint on $C_i$ and one endpoint on $P_j$
       for some $i,j\in\{1,2\}$.  In this case, $P\cup P_j$ contains a
       chord path with both endpoints on $C_i$, again contradicting the
       assumption that $u$ is a bad node.

       \item $P$ has one endpoint on $P_1$ and one endpoint on $P_2$.
       In this case, $P\cup P_1\cup P_2$ contains a chord path with both
       endpoints on $C_1$, again contradicting the assumption that $u$
       is a bad node.

       \item $P$ has one endpoint on $C_1$ and one endpoint on $C_2$.
       The path $P$ is not a keeper, otherwise it would have split $u$
       into two nodes.	Therefore, it must be the case that $P$ contains
       an internal vertex.  Let $S_1$ be the set of internal vertices
       of $P$ and let $S_2$ be the set of vertices on the boundary
       of $u$, not including the endpoints of $P$.  Since $\dual{G}$
       is 3-connected, there is a path from $S_1$ to $S_2$ that does
       not contain either endpoint of $P$.  The shortest such path, $P'$,
       does not contain any edges of $P$.  Again, using portions of $P$,
       $P_1$, $P_2$, and $P'$ we can construct a chord path, contained
       in $u$, with both endpoints on $C_1$ or both endpoints on $C_2$,
       contradicting the assumption that $u$ is a bad node.
\end{enumerate}
    This establishes that $C_1\cup C_2$ is contained in the boundary of a single face $g$ of $\dual{G}$.  The boundary of $g$ contains two disjoint paths $P_1'$ and $P_2'$ joining $C_1$ and $C_2$.  We claim that $P_i'$ is a keeper path, for each $i\in\{1,2\}$.  Indeed, each internal $x$ vertex of $P_i'$ is either a vertex of $P_j$ or is on the boundary of three faces: $g$ and two faces that are not touched by $C$.  In either case, $x$ is not special of Type~Y.  Therefore $P_i'$ has endpoints that are special of Type~A and Type~B with respect to the pinched face $g$ and has no internal vertices that are special of Type~Y, so $P_i'$ is a keeper.  Therefore $\{P_1',P_2'\}=\{P_1,P_2\}$ since, otherwise $f$ would not be a face of $\tilde{H}$.  Therefore $g=f$ so $f$ is a face of $\dual{G}$.
\end{proof}

       %
       %

The following lemma shows that a bad node $u$ in $T_0$ and a bad node $w$
in $T_1$ share at most one edge of $C$.

\begin{lem}\lemlabel{bad_one_shared_edge}
   Any two bad nodes $u$ of $T_i$ and $w$ of $T_j$ have at most one edge in common.
\end{lem}

\begin{proof}
    By \lemref{common_face} $u$ and $w$ are each faces of $\dual{G}$. Therefore, by \lemref{one_shared_edge}, $u$ and $w$ share at most one edge.
\end{proof}

%
%

\subsection{Really Bad Nodes}

At this point we will start making use of the assumption that the
triangulation $G$ has maximum degree $\Delta$, which is equivalent to
the assumption that each face of $\dual{G}$ has at most $\Delta$ edges
on its boundary.

\begin{obs}\obslabel{degree_touched}
  If $G$ has maximum degree $\Delta$ and $C$ has length $\ell$, then
  the number of faces $\tau$ of $\dual{G}$ touched by $C$ is at least
  $2\ell/\Delta$.  At least $\ell/\Delta$ of these faces are in
  the interior of $C$ and at least $\ell/\Delta$ of these faces are
  in the exterior of $C$.
\end{obs}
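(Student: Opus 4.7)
The plan is a straightforward double-counting argument using the Jordan curve theorem and the degree bound. Since $C$ is a simple cycle of length $\ell$ in the plane graph $\dual{T}$, it has exactly $\ell$ edges and separates the plane into an interior and an exterior region.

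First, I would observe that each edge $e$ of $C$ is an edge of $\dual{T}$ and therefore lies on the boundary of exactly two faces of $\dual{T}$; one of these faces is contained in the interior of $C$ and the other in the exterior of $C$. This gives a natural partition of the $\ell$ edges of $C$ into a multiset of edge-face incidences, with $\ell$ incidences on the interior side and $\ell$ incidences on the exterior side. Every face participating in such an incidence is, in particular, touched by $C$.

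Next I would use the degree bound. Because $T$ has maximum degree $\Delta$ and the faces of $\dual{T}$ correspond bijectively to the vertices of $T$, every face of $\dual{T}$ has at most $\Delta$ edges on its boundary. Consequently, any single face in the interior of $C$ can account for at most $\Delta$ of the $\ell$ interior edge-face incidences, so the number of interior faces touched by $C$ is at least $\ell/\Delta$. The same argument applied to the exterior side gives at least $\ell/\Delta$ touched faces in the exterior of $C$. Summing the two sides yields $\tau \ge 2\ell/\Delta$.

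There is no real obstacle here; the only subtlety is making sure that an interior face and an exterior face sharing an edge of $C$ are genuinely distinct (so that the two lower bounds add). This is immediate from the Jordan curve theorem applied to $C$: the two faces of $\dual{T}$ incident to any edge $e$ of $C$ lie in different components of $\R^2 \setminus C$, hence are distinct. Faces touched by $C$ only through a vertex (and not through an edge) need not be counted separately, since the bound $2\ell/\Delta$ is already achieved by faces touched through edges.
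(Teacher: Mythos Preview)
Your argument is correct and essentially identical to the paper's: both orient (implicitly or explicitly) each edge of $C$ to distinguish an interior and an exterior incident face of $\dual{T}$, then use the fact that every face of $\dual{T}$ has at most $\Delta$ edges to pigeonhole the $\ell$ incidences on each side into at least $\ell/\Delta$ distinct faces. Your extra remark about the Jordan curve theorem guaranteeing disjointness of the interior and exterior face sets is a harmless clarification the paper leaves implicit.
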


\begin{proof}
  Orient the edges of $C$ counterclockwise so that, for each edge
  $e$ of $C$, the face of $\dual{G}$ to the left of $e$ is in $C$'s
  interior and the face of $\dual{G}$ to the right of $e$ is in $C$'s
  exterior.  Each face of $\dual{G}$ has at most $\Delta$ edges.
  Therefore, the number of faces to the right of edges in $C$ is
  at least $\ell/\Delta$. The same is true for the number of faces
  of $\dual{G}$ to the left of edges in $C$.
\end{proof}

For each node $u$ of $T_i$, we define $N(u)$ as the set of nodes in $T_0$
and $T_1$ (excluding $u$) that share an edge of $\dual{G}$ with $u$.
Note that $N(u)$ contains the neighbours of $u$ in $T_i$ as well as
nodes of $T_{1-i}$ with which $u$ shares an edge of $C$.

%

We say that a node $u$ is \emph{really bad} if $u$ and all nodes in $N(u)$ are bad.

\begin{lem}\lemlabel{lots_of_really_bad}
  For each $i\in\{0,1\}$ and each $0<\alpha < 1/24$,
  if $G$ has maximum degree $\Delta$, $C$ has length $\ell$, and the number
  $\kappa$, of faces of $\dual{G}$ caressed by $C$ is at most $\alpha\ell/\Delta$, then the number $b_i$ of really bad nodes in $T_i$ is at least $n_i - \alpha(120\Delta+72) n_i$.
\end{lem}

\begin{proof}
  Without loss of generality, let $i=0$. From \obsref{degree_touched}, we know that $\tau_0\ge \ell/\Delta$.  Therefore, $\kappa_0\le\kappa\le \alpha\ell/\Delta \le \alpha\tau_0 \le \tau_0/6$ so, by \lemref{few_caressed_implies_many_nodes}, $n_0 \ge
  \tau_0/8$.

  By \lemref{most_nodes_are_bad}, if $b_0 < (1-24\alpha)n_0$, then
  \[
      \kappa > \kappa_0 \ge \alpha\tau_0
      \ge \alpha\ell/\Delta
      \enspace .
  \]
  This violates our assumption that $\kappa \le \alpha\ell/\Delta$.
  Therefore, we may assume that $b_0\ge (1-24\alpha)n_0$.

  We now want to study how many of the bad nodes in $T_0$ are really bad. Let $A$ be the set of nodes in $T_0$ that are not bad and partition $A$ into $A_1$ (leaves), $A_2$ (degree-2 nodes) and $A_{\ge 3}$ (nodes of degree at least 3).  We make use of the
  following inequality:
	\begin{equation}
		|A_1| = 2 + \sum_{w\in A_{\ge 3}}(\delta_w-2) \ge \sum_{w\in A_{\ge 3}}(\delta_w-2)
		\ge \sum_{w\in A_{\ge 3}}\delta_w/3 \enspace ,  \eqlabel{triple}
	\end{equation}
  which is true because $x-2\ge x/3$ for all $x\ge 3$.

  Now each node $w$ in $A$ can prevent at most $\delta_w$ bad nodes of $T_0$ from being really bad.  We count this as follows:
  \[
   \sum_{w\in A}\delta_w
     = \sum_{w\in A_1}\delta_w
         + \sum_{w\in A_2}\delta_w
         + \sum_{w\in A_{\ge 3}}\delta_w \\
	   \le |A_1| + 2|A_2| + 3|A_1| \enspace .
  \]
  Now, $A_1$ contains leaves of $T_0$ and, by \lemref{one_caressed_leaf}, each leaf of $T_0$ contains a caressed face.  Therefore $|A_1|\le\kappa$.
  Next, $A_2$ contains degree-2 nodes of $T_0$ that are not bad.  If a node has degree-$2$ and contains no caressed face, then it is bad. Therefore each node in $A_2$ contains a caressed face. Therefore $|A_2|\le \kappa$, so $|A_1|+2|A_2|+3|A_1| \le 6\kappa$.  Picking up where we left off:
  \[
      \sum_{w\in A}\delta_w \le |A_1|+2|A_2|+3|A_1|
       \le 6\kappa
     \le 6\alpha\ell/\Delta \le 48\alpha n_0 \enspace ,
  \]
  where the last inequality uses the fact that $n_0 \ge
  \tau_0/8 \ge \ell/(8\Delta)$.
  That is, the set $A$ of non-bad nodes in $T_0$ prevents at most $48\alpha n_0$ bad nodes in $T_0$ from being really bad.  Next we account
  for nodes in $T_1$ that prevent bad nodes in $T_0$ from being really bad.

  Let $A'$ be the set of nodes in $T_1$ that are not bad.  For two
  nodes $u$ in $T_0$ and $w$ in $T_1$,  $w\in N(u)$ if and only if $w$
  and $u$ share an edge of $C$.  The number of edges of $C$ incident to a node $w$ is at most $\Delta\tau_w$.  Therefore, we can upper bound the number of bad nodes in $T_0$ that are prevented from being really bad by some node in $T_1$ as
  \begin{align*}
   \sum_{w\in A'}\Delta \tau_w \\
    & = \sum_{w\in A'}\Delta (\rho_w+\kappa_w)
    & \text{(since $\tau_w=\rho_w+\kappa_w$)} \\
    & \le  \sum_{w\in A'}(3\Delta\kappa_w + 2\Delta\delta_w) & \text{(by \lemref{many_caressed_or_high_degree})} \\
    & \le  3\Delta\kappa + \sum_{w\in A'}2\Delta\delta_w \\
    & < 3\Delta\kappa + 12\Delta\kappa & \text{(by defining $A'_1$, $A'_2$, $A'_{\ge 3}$ and arguing as above)}\\
    & = 15\Delta\kappa \\
    & \le 15\alpha\ell & \text{(since $\kappa\le\alpha\ell/\Delta$, by assumption)} \\
    & \le 120\alpha\Delta n_0 & \text{(since $n_0\ge \tau_0/8\ge \ell/(8\Delta)$)}
  \end{align*}
  Therefore, the number of bad nodes in $T_0$ is $b_0$ and the number of these that are really bad is at least
  \[
     b_0 - \alpha(120\Delta + 48)n_0 \ge
     n_0 - \alpha(120\Delta + 72)n_0 \enspace . \qedhere
  \]
\end{proof}

We say that a node $u$ is \emph{really really bad} if all the nodes in $N(u)$ are really bad.  (Note that this implies that $u$ is bad.)  The following lemma extends \lemref{lots_of_really_bad} to really really bad nodes:

\begin{lem}\lemlabel{lots_of_really_really_bad}
  For each $i\in\{0,1\}$ and each $0<\alpha < 1/24$,
  if $G$ has maximum degree $\Delta$, $C$ has length $\ell$, and the number
  $\kappa$, of faces of $\dual{G}$ caressed by $C$ is at most $\alpha\ell/\Delta$, then the number $b_i$ of really really bad nodes in $T_i$ is at least $n_i - \alpha(\Delta+1)(120\Delta+72) n_i= n_i - O(\alpha\Delta^2)$.
\end{lem}

\begin{proof}
    A node $u$ is a \emph{fringe node} if it is really bad but not really really bad. A node $u$ is a \emph{critical node} if it is bad but not really bad.  Observe that every fringe node $u$ is in $N(w)$ for some critical node $w$.  To bound the number of fringe nodes, it therefore suffices to bound $\sum_{w}|N(w)|\le\sum_{w}\Delta$ where the sum is over all critical nodes and the inequality is due to \lemref{common_face}, so $|N(w)|\le\Delta$ for any bad node $w$.

    By \lemref{lots_of_really_bad}, the number of nodes that are not really bad, and hence the number of critical nodes, is at most $\alpha(120\Delta+72) n_i$.  Therefore, the number of fringe nodes is at most $\alpha\Delta(120\Delta+72) n_i$.
    Any node that is not really really bad is either a fringe node or is not really bad.  Therefore, the number of nodes that are really really bad is at least
    \[
        n_i - \alpha(\Delta+1)(120\Delta+72) n_i \enspace . \qedhere
    \]
\end{proof}

The following observation, illustrated in \figref{three-paths}, follows from the fact that all the nodes it considers are bad and that $\tilde{H}$ is a cubic graph, so each vertex of $\tilde{H}$ is on the boundary of 3 faces. The second part of the figure shows an example in which $\{a_1,\ldots,a_r\}$ and $\{b_1,\ldots,b_s\}$ are not disjoint. (In this example $b_1=a_3$.)

\begin{figure}
    \centering{
        \includegraphics{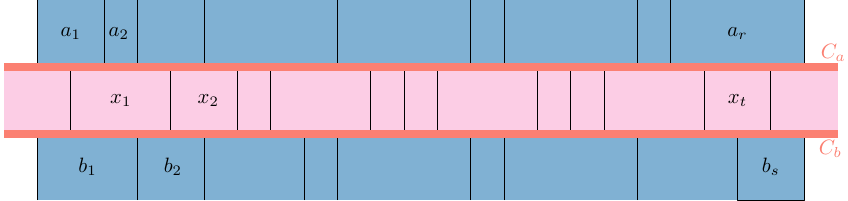} \\[2em]
        \includegraphics{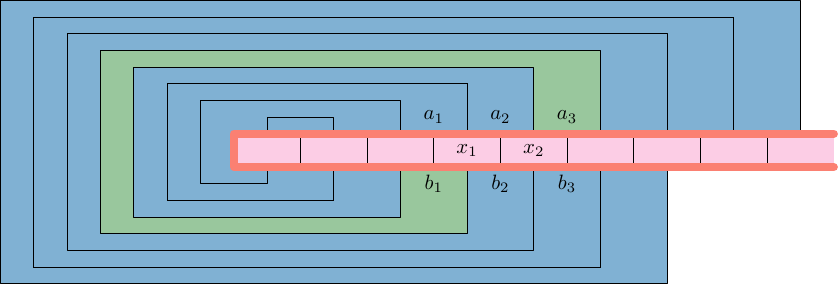}
    }
    \caption{Two illustrations of \obsref{three-paths}}
    \figlabel{three-paths}
\end{figure}

\begin{obs}\obslabel{three-paths}
    Let $x_1,\ldots,x_t$, $t\ge 1$ be a path in $T_0$ consisting entirely of really bad nodes.  Then $C\cap\bigcup_{i=1}^t x_i$ consists of two paths $C_a$ and $C_b$ each having at least one edge and the subgraph of $T_1$ induced by $\bigcup_{i=1}^t N(x_i)$ is contained in two (not necessarily disjoint) paths $a_1,\ldots,a_r$ and $b_1,\ldots,b_s$, $r,s\ge 1$ where $a_i$ contains an edge of $C_a$ for each $i\in\{1,\ldots,r\}$ and $b_i$ contains an edge of $C_b$ for each $i\in\{1,\ldots,s\}$.
\end{obs}

%

\subsection{Tree/Cycle Surgery}

We summarize the situation so far.  By \lemref{cycle_to_curve}, finding a large collinear set is equivalent to finding a cycle in $\dual{G}$ that caresses many faces.  By existing results on the circumference of cubic triconnected graphs, $\dual{G}$ has a cycle $C_0$ of length $\ell=\Omega(n^{\alpha})$ for some $\alpha > 0.8$.  Thus we assume that $\dual{G}$ has a cycle $C_0$ of length $\ell$ and we want to show the existence of a cycle $C$ that caresses $\Omega(\ell/\Delta^4)$ faces.

Because each face of $\dual{G}$ has at most $\Delta$ edges, $C_0$ touches $\Omega(\ell/\Delta)$ faces (\obsref{degree_touched}).  To complete the proof of \thmref{main} we must deal with the situation where $C_0$ caresses $o(\ell/\Delta^4)$ faces and therefore each of $T_0$ and $T_1$ has $o(\ell/\Delta^4)$ leaves (\lemref{one_caressed_leaf}), $\Omega(\ell/\Delta)$ nodes (\lemref{few_caressed_implies_many_nodes}), and the fraction of really really bad nodes in $T_0$ and $T_1$ is $1-o(1/\Delta)$ (\lemref{lots_of_really_really_bad}).

\figref{few_caressed_ii} illustrates an extreme example of this situation.  To handle cases like these, the only option is to perform surgery on the cycle $C$ to increase the number of caressed faces.  We achieve this by performing a surgery that increases the number of leaves in $T_1$. This surgery is quite delicate and requires a particular node $u$ for which we have a good enough understanding of the faces of $\tilde{H}$ surrounding $u$ so that we can make a local modification of $C$ around $N(u)$ that is guaranteed to stricly increase the number of caressed faces.

\begin{figure}
    \centering{
        \includegraphics{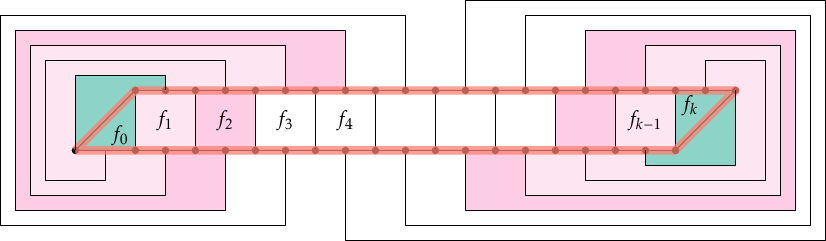}
    }
    \caption{An example in which $C$ caresses only 4 faces of $\dual{G}$, $T_0$ has only 2 non-bad nodes (in teal), 2 non-really bad nodes (in light pink), and 2 non-really really bad nodes (in pink).}
    \figlabel{few_caressed_ii}
\end{figure}

\begin{proof}[Proof of \thmref{main}]
By \lemref{cycle_to_curve}, it suffices to prove the existence of a cycle $C$ in $\dual{G}$ that caresses $\Omega(\ell/\Delta^4)$ faces.  We begin by applying \lemref{lots_of_really_really_bad} with $\alpha = \epsilon/\Delta^3$.  For sufficiently small, but constant, $\epsilon$, \lemref{lots_of_really_really_bad} implies that $\kappa = \Omega(\ell/\Delta^4)$ or the number of nodes in $T_0$ that are not really really bad is at most $O(\epsilon n_0/\Delta)$.  In the former case, $C$ caresses $\Omega(\ell/\Delta^4)$ faces of $\dual{G}$ and we are done.

In the latter case, consider the forest obtained by removing all nodes of $T_0$ that are not really really bad.  This forest has $(1-O(\epsilon/\Delta))n_0$ nodes.  We claim that it also has $O(\epsilon n_0/\Delta)$ components.  To see why this is so, let $L$ be the set of leaves in $T_0$ and let $S$ be the set of non-leaf nodes in $T_0$ that are not really really bad.  Observe that it is sufficient to upper bound the number, $k$, of components in $T_0-S$.

Since removing a degree $d$ vertex from a graph increases the number of components by at most $d-1$, we have $k \le \sum_{u\in S}(\deg_{T_0}(u)-1)$. Since $(S,L)$ is a partition of the nodes of $T_0$ that are not really really bad, we have $|L|\le |S|+|L|=  O(\epsilon n_0/\Delta)$.  Recall that a standard fact about trees is that the number of leaves in a tree $G$ is exactly $2+\sum_{u}(\deg_T(u)-2$, where the sum runs over all non-leaf nodes $u$ of $G$. Therefore,
\[
    |L| \ge \sum_{u\in S}(\deg_{T_0}(u)-2)
     = \sum_{u\in S}(\deg_{T_0}(u)-1) - |S| = k - |S|  \enspace .
\]
Therefore $k\le |S| + |L| = O(\epsilon n_0/\Delta)$, as claimed.

Thus the forest induced by all really really bad nodes of
$T_i$ has at most $O(\epsilon n_0/\Delta)$ components, each of which is
a path.  At least one of these paths contains $\Omega(\Delta/\epsilon)$
nodes. In particular, for a sufficiently small constant $\epsilon$,
one of these components, $X$, has at least $5\Delta$ nodes.


Consider some node $u$ in $X$, and let $C_a$ and $C_b$ be the
two components of $u\cap C$. By \obsref{three-paths}, the subgraph of $T_1$ induced by $N(u)$ consists
of two paths $a_1,\ldots,a_r$ and $b_1,\ldots,b_s$ of really bad nodes
where each $a_1,\ldots,a_r$ contains an edge of $C_a$ and each of
$b_1,\ldots,b_r$ contains an edge of $C_b$.

It follows from \lemref{bad_one_shared_edge} that among any sequence of $\Delta$
consecutive nodes in $X$, at least one node has $r\ge 2$ and therefore $|N(u)|\ge 5$.  Let $u$ be any such node that is not among the first $2\Delta$
or last $2\Delta$ nodes of $X$.  Such a $u$ always exists because $X$
contains at least $5\Delta$ nodes.

Let $x_0=u$. We now define notations for some of the nodes in the vicinity of $u$ (refer to \figref{rrbad}):
\begin{enumerate}
  \item there is a path
    $x_{2\Delta},\ldots,x_1,x_0,y_1,\ldots,y_{2\Delta}$
    in $T_0$ consisting entirely of really really bad nodes.
  \item some really bad node $a_1$ of $T_1$ shares an edge with each of
    $x_0,\ldots,x_i$ for some $i\in\{1,\ldots,\Delta-4\}$.
  \item some really bad node $a_2$ of $T_1$ shares an edge with $a_1$ and
    and edge with $x_0$.
  \item some really bad node $a_0\neq a_2$ of $T_1$ shares an edge with $a_1$ and
    with each of $x_i,\ldots,x_{i+j}$ for some $j\in\{0,\ldots,\Delta-4\}$.
\end{enumerate}

\begin{figure}
   \begin{center}\includegraphics{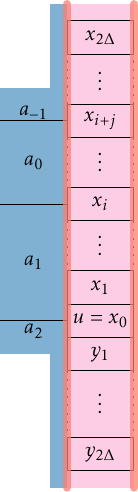}\end{center}
   \caption{Nodes in the vicinity of $u=x_0$.}
   \figlabel{rrbad}
\end{figure}

The surgery we perform focuses on the nodes $u$ and $a_1$.  Consider
the two components of $C\cap a_1$. At least one of these components, $p$, shares
an edge with $u$.  By \lemref{bad_one_shared_edge}, the other component,
$q$, does not share an edge with $u$.  Imagine removing $u$ from $T_0$,
thereby separating $T_0$ into a component $T_x$ containing $x_1$ and a
component $T_y$ containing $y_1$.  Equivalently, one can think of removing
the edges of $u$ from $C$ separating $C$ into two paths $C_x$ and $C_y$
on the boundary of $T_x$ and $T_y$, respectively.  Since $q$ does not share an edge with $u$, $q\subseteq C_x$ or $q\subseteq C_y$.  We treat these cases separately:

\begin{figure}
   \begin{center}
     \begin{tabular}{ccc}
       \includegraphics{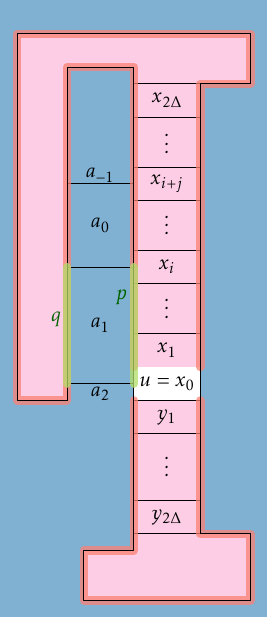} &
       \includegraphics{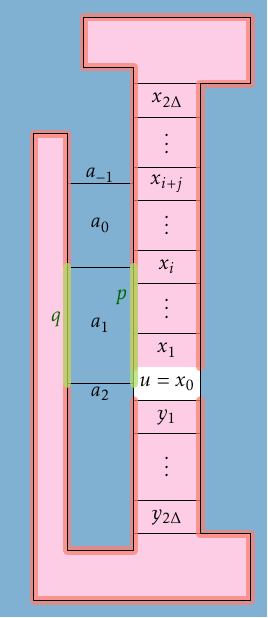} &
       \includegraphics{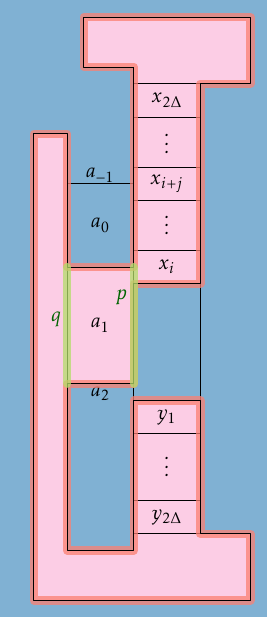} \\
       (1) & (2) & $C'$
     \end{tabular}
   \end{center}
   \caption{Cases 1 and 2 in the proof of \thmref{main} and the surgery performed in Case~2.}
   \figlabel{main_cases}
\end{figure}
\begin{enumerate}
   \item $q\subset C_x$.  We transform this into Case~2, by redefinining $u$, $x_1$, $y_1$ and $a_1$ as follows:  By \lemref{one_shared_edge} $a_1\setminus C$ contains exactly two edges of $\dual{G}$ and exactly one of these edges, $e$, is not incident to $u$. Instead, $e$ is incident to $x_i$.  We set $u'=x_i$, $x_1'=x_{i-1}$, $y_1'=x_{i+1}$, and $a_1'=a_1$. Observe that $a_1'$ connects the two components of $T_0-u'$ and shares edges with $u'$ and $x_1'$. This is exactly the situation considered in Case~2, next.

   \item $q\subset C_y$.  At this point it is helpful to think of $T_0$, $T_1$, and $C$ as a partition of $\R^2$, where nodes of $T_0$ are coloured red, nodes of $T_1$ are coloured blue and $C$ is the (purple) boundary between red and blue.  To describe our modifications of $C$, we imagine changing the colours of nodes.  The effect that such a recolouring has on $C$ is immediately obvious: It produces a 1-dimensional set $C'$ that contains every (purple) edge contained in the red-blue boundary. The set $C'$ is a collection of vertices and edges of $\dual{G}$. Therefore, if $C'$ is a simple cycle, then $C'$ defines a new pair of trees $T_0'$ and $T_1'$.


   Refer to the right two thirds of \figref{main_cases} for a simple
   (and misleading) example of what follows. For a full example,
   refer to \figref{recolouring}.  The surgery we perform recolours
   $x_0,x_1,\ldots,x_{i-1}$ blue and recolours $a_1$ red.  Observe that,
   because $q\subset C_y$ and $p$ contain an edge of $x_i$, this implies
   that the red subset of $\R^2$ is connected. Similarly, one can verify that $T_1-\{a_1\}$ contains two components, one containing $a_2$ and one containing $a_0$ and $b_1$.  The blue subset of $\R^2$ is connected because it contains a path from $a_2$ through $u$ to $b_1$.  Therefore the red and blue subsets of $\R^2$ are each connected and their common boundary $C'$ is a simple cycle consisting of edges of $\dual{G}$.  The new trees $T_0'$
   and $T_1'$ are therefore well defined.  We now make two claims that
   will complete our proof.

   \begin{figure}
     \begin{center}
       \begin{tabular}{cc}
         \includegraphics{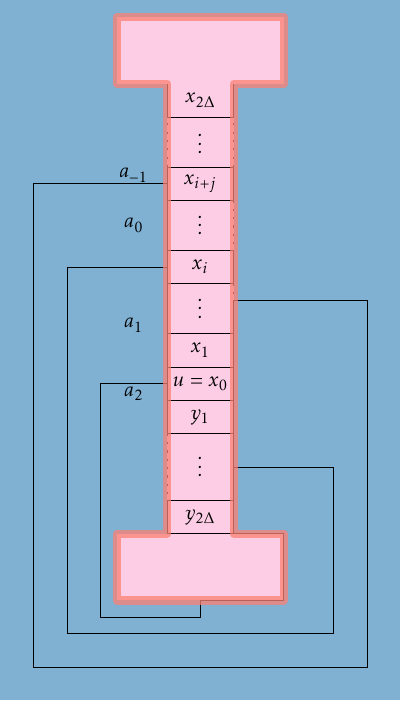} &
         \includegraphics{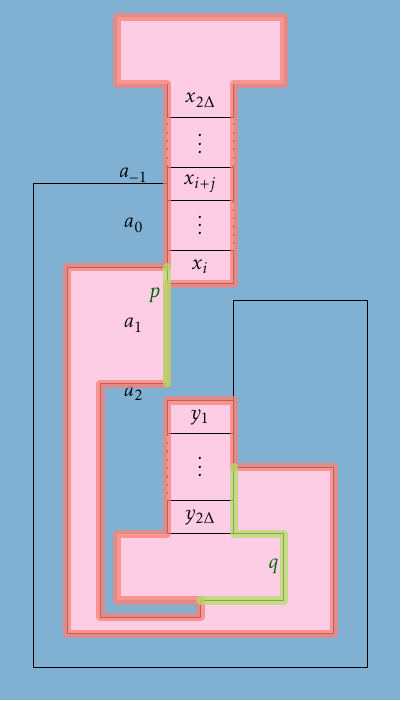}
       \end{tabular}
     \end{center}
     \caption{Performing surgery on $C$ to obtain $C'$ that caresses $a_0$.}
     \figlabel{recolouring}
   \end{figure}

   \begin{clm}\clmlabel{only_bad}
      For each $i\in\{0,1\}$, and each node $w$ of $T_i$ that is not bad,
      $C\cap w=C'\cap w$.  (Equivalently, for every face $f$ of $\dual{G}$
      that is not a bad node of $T_0$ or $T_1$, $C\cap f=C'\cap f$.)
   \end{clm}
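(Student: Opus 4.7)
The plan is to identify exactly which edges of $\dual{T}$ can appear in the symmetric difference $C\triangle C'$, and to check that none of them meets a non-bad node. Since both $C$ and $C'$ arise as the red-blue boundary of a node-colouring of $\R^2$, an edge $e$ of $\dual{T}$ can belong to $C\triangle C'$ only if at least one of the two faces of $\dual{T}$ incident to $e$ lies in a node whose colour was changed. The only recoloured nodes are $R=\{x_0,x_1,\ldots,x_{i-1},a_1\}$, so every edge in $C\triangle C'$ is incident in $\dual{T}$ to a face contained in some node of $R$.

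Now fix a non-bad node $w$. I would then verify two assertions. First, $w\notin R$: every $x_k$ with $0\le k\le i-1$ lies on the path $X$ of really really bad nodes chosen around $u=x_0$, and $a_1$ is really bad by the construction around $u$, so every element of $R$ is in particular bad. Second, $w\notin N(v)$ for any $v\in R$: by definition of really really bad, $N(x_k)\subseteq N^2(x_k)$ consists entirely of bad nodes, and by definition of really bad, $N(a_1)\subseteq N^1(a_1)$ also consists entirely of bad nodes; since $N$ is symmetric, $w\in N(v)$ for some $v\in R$ would force $w$ to be bad.

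Combining the two assertions, every edge of $\dual{T}$ that lies in the closed region $w$ has both of its incident faces in nodes outside $R$: an edge strictly interior to $w$ is bounded by two faces both in the unchanged node $w\notin R$, while an edge on the boundary of $w$ is bounded by one face in $w$ and one face in some $N$-neighbour of $w$, which by the second assertion also lies outside $R$. Such an edge cannot be in $C\triangle C'$, so $C$ and $C'$ coincide on the closed region $w$, giving $C\cap w=C'\cap w$. The only mild subtlety is verifying that the interior edges of $w$ really are inert under the surgery, but this is immediate from the fact that the recolouring acts on whole nodes rather than on individual faces of $\dual{T}$; I do not anticipate a genuine obstacle.
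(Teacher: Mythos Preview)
Your proposal is correct and follows essentially the same argument as the paper: both identify that $C$ and $C'$ can differ only on edges incident to the recoloured nodes $\{x_0,\ldots,x_{i-1},a_1\}$, and then use the fact that the $x_k$ are really really bad and $a_1$ is really bad to conclude that every neighbour of a recoloured node is bad, so no non-bad node $w$ is affected. Your treatment is slightly more explicit than the paper's (you separate interior edges of $w$ from boundary edges and invoke symmetry of $N$), but the substance is the same.
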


   \begin{clm}\clmlabel{a0_caressed}
      The face $a_0$ is caressed by $C'$.
   \end{clm}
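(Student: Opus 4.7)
My plan is to compute $C' \cap a_0$ directly and verify that it is non-empty and a single simple arc, which is exactly what it means for $a_0$ to be caressed by $C'$. The setup I will exploit is that $a_0$ is bad, so its degree in $T_1$ is exactly $2$ and its boundary $\partial a_0$ decomposes in cyclic order into $C^a, P_1, C^b, P_2$, where $C^a, C^b$ are the two connected components of $C \cap a_0$ and $P_1, P_2$ are the two chord paths of $\tilde H$ bounding $a_0$. I take $P_1$ to be the chord path joining $a_0$ to $a_1$ (it exists because $a_0$ and $a_1$ share an edge) and $P_2$ the one joining $a_0$ to its unique other $T_1$-neighbour $w$; note $w \ne a_1$ because $T_1$ is a tree. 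Applying \lemref{one-edge-between} to the pair of bad nodes $a_0, a_1 \in T_1$ then shows that $P_1$ is a single edge $e$ of $\dual T$.

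The main step is to read off $C' \cap a_0$ from the surgery. Among $T_1$-nodes, only $a_1$ is recoloured red, so $P_2$ (whose far side $w$ remains blue) contributes no edges to $C'$, while $e = P_1$, now on the red/blue boundary, is added to $C'$. Among $T_0$-nodes, only $x_0, \ldots, x_{i-1}$ are recoloured blue, and the key claim I will establish is that none of these is a $T_0$-neighbour of $a_0$, so every edge of $C^a \cup C^b$ remains on the red/blue boundary in $C'$. Together these give $C' \cap a_0 = C^a \cup e \cup C^b$.

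To finish, I will invoke the cyclic order on $\partial a_0$: the edge $e$ abuts $C^a$ at one of its endpoints and $C^b$ at the other, so $C^a \cup e \cup C^b$ is a single connected arc whose two distinct endpoints are the endpoints of $P_2$ on $C$. Hence $C' \cap a_0$ is non-empty, connected, and not a closed curve, so $a_0$ is touched but not pinched by $C'$, i.e., caressed. I expect the main obstacle to be the geographic separation step, namely ruling out that $a_0$ shares any $\dual T$-edge with $x_0, \ldots, x_{i-1}$; this has to be read out of the detailed configuration around $u$ drawn in \figref{rrbad}, essentially the fact that $a_1$ screens $\{x_0,\ldots,x_{i-1}\}$ off from $\partial a_0$, while everything else follows cleanly from \lemref{one-edge-between} and the definition of \emph{caressed}.
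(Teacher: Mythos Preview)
Your decomposition of $\partial a_0$ into $C^a, P_1, C^b, P_2$ and the handling of $P_1$ and $P_2$ are correct, as is the appeal to \lemref{one-edge-between}. The gap is exactly where you flagged it. You want to show that none of $x_0,\ldots,x_{i-1}$ shares an edge of $C$ with $a_0$, arguing that $a_1$ ``screens'' them from $\partial a_0$. That screening is valid only on the $C^a$ side: there $a_0$ borders $x_i,\ldots,x_{i+j}$, and $a_1$ does lie between $a_0$ and $x_0,\ldots,x_{i-1}$. It can fail on the $C^b$ side. Each $x_k$ is a bad node of $T_0$ with a second $C$-arc that is not covered by $a_1$, and nothing in the configuration prevents the face $a_0$ from meeting some of $x_0,\ldots,x_{i-1}$ along those arcs. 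The paper explicitly allows this possibility in its boundary analysis of $a_0$ (item~4), and when it occurs the corresponding edges of $C^b$ become blue/blue after recolouring and drop out of $C'$; your equation $C'\cap a_0 = C^a \cup e \cup C^b$ is then false.

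The paper's route is not to rule this out but to control where the lost edges lie: within $C^b$ they form a contiguous (possibly empty) block at the $P_2$ end, while the remainder of $C^b$ lies on $C_y$ and survives because nodes of $T_y$ stay red. Hence the complement of $C'$ in $\partial a_0$ is still a single arc, namely $P_2$ together with that block, and $C'\cap a_0$ is the single arc consisting of the surviving tail of $C^b$, then $e$, then $C^a$. To complete your argument you need this ordering step; the bare screening claim is not available from the figure.
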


   These two claims complete the proof because, together, they imply that $C'$ caresses at least one more face of $\dual{G}$ than $C$.  Indeed, by definition, $C$ did not caress any faces belonging to bad nodes. Therefore, the first claim implies that the faces of $\dual{G}$ caressed by $C'$ are a superset of those caressed by $C$.  The face $a_1$ is a bad node of $T_i$ so it is not caressed by $C$ but the second claim states that it is caressed by $C'$.  Therefore $C'$ caresses at least one more face than $C$.

   This surgery recolours at most $\Delta-2\le \Delta$ nodes of $T_0$
   and $T_1$, so the difference in length between $C$ and $C'$
   is at most $\Delta^2$.  If we start with a cycle $C$
   of length $\ell$, then we can perform this surgery at least
   $\ell/(4\Delta^2)$ times before the length of $C$ decreases
   to less than $\ell'=\ell/2$. If at some point during this process,
   we are no longer able to perform this operation, it is because
   $C$ caresses $\Omega(\ell'/\Delta^4)=\Omega(\ell/\Delta^4)$
   faces of $\dual{G}$ and we are done.  If the process runs to completion, then by its end, the number of faces caressed by $C$ is at least
   $\ell/(4\Delta^2)\in\Omega(\ell/\Delta^2)\subset\Omega(\ell/\Delta^4)$
   and we are also done.

   Thus, all that remains is to prove \clmref{only_bad} and
   \clmref{a0_caressed}.

   To prove \clmref{only_bad}, we observe that $C$ and $C'$ differ only on
   the boundaries of nodes that are recoloured.  Thus, it is sufficient
   to show that all nodes in $R=\cup\{N(v): v\in\{x_0,\ldots,x_{i-1},a_1\}$
   are bad.  But this is immediate since $x_0,\ldots,x_{i-1}$ are really
   really bad and $a_1\in N(x_0)$, so $a_1$ is really bad.  Since every node in $R$ share an edge with at least one of $\{x_0,\ldots,x_{i-1},a_1\}$, every
   node in $R$ is therefore bad, as required.

   To prove \clmref{a0_caressed} we consider the boundary of the face
   $a_0$ of $\dual{G}$ after the recolouring operation.  This boundary
   consists of, in cyclic order:
   \begin{enumerate}
     \item  An edge $p_0p_1$ shared between $a_0$ and $a_1$.  This edge is in
       $C'$ since $a_0$ is in $T_1'$ and $a_1$ is in $T_0'$. This edge has one endpoint, $p_0$, on the boundary of $x_i$ ($p_0$ is also an endpoint of $p$).

     \item A path $p_1,\ldots,p_\mu$ whose edges are shared with $x_i,\ldots,x_{i+j}$.  The nodes $x_i,\ldots,x_{i+j}$ are in $T_0$ and are distinct from $x_0,\ldots,x_{i-1}$, so these nodes are in $T_0'$.
     Therefore, $p_1,\ldots,p_\mu$ is also contained in $C'$.

     \item An edge $p_\mu p_{\mu+1}$ shared between $a_0$ and another node $a_{-1}\neq
     a_1$ of $T_1$. The faces of $a_{-1}$ are in $T_1'$ because $a_1$
     is the only face that moves from $T_1$ to $T_0'$. ($a_1$ is the
     only face whose colour goes from blue to red.)  The edge $p_\mu p_{\mu+1}$ is therefore not contained in $C'$.

     \item A path $p_{\mu+1},\ldots,p_\nu$ with $p_\nu=p_0$ that is contained in $C$. Let $C_x'$ be the path obtained by removing all edges on the boundary of $x_1,\ldots,x_{i-1}$ from $C_x$.  Thus, the boundary of $C$ is partitioned into four paths: $C_y$; a path $P_1$ that contains $p$; $C_x'$; and a path $P_2$ that does not contain $p$.  Without loss of generality, assume that these four paths occur in the order $C_y,P_1,C_x',P_2$ when traversing $C$ clockwise.

     The path $p_{\mu+1},\ldots,p_{\nu}$ ends at $p_\nu=p_0$, which is contained in $C_y$. This path must therefore either begin in $P_2$ or be entirely contained in $C_y$ since, otherwise it would contain an edge of $x_i$, contradicting \lemref{one_shared_edge}.  The edges of $P_2$ are not in $C'$. Therefore $p_{\mu+1},\ldots,p_{\nu}$ begins with a (possibly empty) sequence of edges $p_{\mu+1},\ldots,p_{\mu+k}$ not contained in $C'$ followed by a non-empty sequence $p_{\mu+k},\ldots,p_\nu$ of edges that are contained in $C'$.
  \end{enumerate}
  Therefore the intersection $C'\cap a_0$ is a path $p_{\mu+k},\ldots,p_\nu,p_1,\ldots,p_{\mu}$ so $a_0$ is caressed by $C'$. \qedhere
\end{enumerate}
\end{proof}


\section{Discussion}
\seclabel{discussion}


It remains an open problem to eliminate the dependence of our results
on the maximum degree, $\Delta$, of $G$.  The next significant step
is to resolve the following conjecture:

\begin{conj}
  If $G$ is a triangulation whose dual $\dual{G}$ has a cycle of length
  $\ell$, then $\dual{G}$ has a cycle that caresses $\Omega(\ell)$
  faces. (Therefore, by \lemref{cycle_to_curve} and \thmref{dalozzo},
  $G$ has a collinear set of size $\Omega(\ell)$.)
\end{conj}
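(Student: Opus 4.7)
The plan is to prove the conjecture by a degree-oblivious analogue of the argument for \thmref{main}, exploiting the fact that $\dual{T}$ is cubic (every vertex of $\dual{T}$ has degree exactly $3$), which is the structural property that morally replaces the bound $\Delta$ on dual face degrees. I would begin by fixing a cycle $C$ of $\dual{T}$ that is lexicographically optimal in the pair $(|C|, \kappa)$: among all longest cycles of $\dual{T}$, pick one that caresses the maximum number of faces. The aim is to show that this $C$ already caresses $\Omega(\ell)$ faces, bypassing \obsref{degree-touched} and \lemref{lots-of-really-i-bad}, which are precisely the places where powers of $\Delta$ enter the current proof.

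First I would tighten the bookkeeping in the pinched/caressed framework using $3$-regularity. Every vertex of $C$ has a unique off-$C$ neighbour, and the edge to that neighbour lies on the boundary of a unique face that is either pinched or untouched. A direct edge-budget argument along $C$ should then show that the number of touched faces is $\Theta(\ell)$ outright, with no $1/\Delta$ loss, replacing \obsref{degree-touched}. I would similarly revisit the discharging in \lemref{many-caressed-or-high-degree}, aiming to replace its constants by ones that depend only on cubicity and $3$-connectedness of $\dual{T}$ rather than on the boundary lengths of pinched faces.

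The central step is a degree-oblivious surgery. Given a bad node $u$ of $T_0$, \lemref{common-face} places both components of $C \cap u$ on the boundary of a single dual face $f_u$, while \lemref{one-edge-between} and \lemref{bad-one-shared-edge} make the separating chords single edges. Using the unique off-$C$ neighbour of each vertex of $C \cap u$, I would try to re-route $C$ across $f_u$ by a constant-length detour producing a cycle $C'$ of length at least $|C|$. Since $|C|$ is already maximum, $C'$ cannot be strictly longer, so such a detour must strictly increase $\kappa$, contradicting the optimality of $C$ unless $\kappa = \Omega(\ell)$ already.

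The main obstacle, and the reason the conjecture is genuinely open, is that a single pinched face $f$ of $\dual{T}$ may have arbitrarily high boundary complexity and can intersect $C$ in many components. A purely local surgery near one entry of $C$ into $f$ can have uncontrolled side effects on the other components of $C \cap \partial f$, and a single improvement step may fail to strictly decrease any natural potential. What seems to be needed is a global potential such as $\Phi(C) = \sum_f (c(f)-1)^+$, where $c(f)$ counts the connected components of $C \cap f$, together with a structural lemma asserting that any $C$ with $\Phi(C)>0$ admits a length-preserving modification that strictly decreases $\Phi$. Establishing such a lemma for longest cycles in cubic $3$-connected planar graphs is where I expect the hard work, and most likely a genuinely new idea, to be required.
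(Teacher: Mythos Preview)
The statement you are addressing is a \emph{conjecture}; the paper does not prove it and explicitly lists it as an open problem in \secref{discussion}. So there is no ``paper's own proof'' to compare against, and your proposal should be read as a research plan rather than a proof.

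As a plan, it contains a concrete error at the first step. You assert that cubicity of $\dual{T}$ lets you replace \obsref{degree-touched} by a $\Theta(\ell)$ bound on the number of touched faces. Cubicity bounds the vertex degrees of $\dual{T}$, but the obstruction in \obsref{degree-touched} is the size of \emph{faces} of $\dual{T}$, which equal vertex degrees of $T$ and are unbounded in the conjecture's hypotheses. A cycle $C$ of length $\ell$ can run for $\Theta(\Delta)$ consecutive edges along the boundary of a single large face of $\dual{T}$, so the edge-budget argument you sketch still only yields $\tau \ge 2\ell/\Delta$. The ``unique off-$C$ neighbour'' observation is correct but does not control how many vertices of $C$ share the same adjacent face. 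Likewise, the discharging in \lemref{many-caressed-or-high-degree} already uses only cubicity and $3$-connectedness (via \lemref{one-shared-edge}); the $\Delta$-dependence in \thmref{main} enters elsewhere, so there is nothing to tighten there.

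Your surgery paragraph also has a logical slip: from ``$C'$ cannot be strictly longer than $C$'' you cannot conclude ``the detour must strictly increase $\kappa$''; the detour might simply not exist, or might produce a cycle of the same length and the same $\kappa$. You do, in your final paragraph, correctly locate the genuine difficulty (a single pinched face may meet $C$ in many components, and local surgery has global side effects), and your proposed potential $\Phi(C)=\sum_f (c(f)-1)^+$ is a reasonable thing to try to control. But the earlier steps do not stand on their own, and the proposal as written does not constitute a proof of the conjecture.
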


\section*{Acknowledgement}

Much of this research took place during the Sixth Workshop on Order and
Geometry held in Ciążeń, Poland, September 19--22, 2018.  The authors
are grateful to the organizers, Stefan~Felsner and Piotr~Micek, and to
the other participants for providing a stimulating research environment.

\bibliographystyle{plain}
\bibliography{collsets}

\end{document}